\newtheorem{thm}{Theorem}[section]
\newtheorem{prop}[thm]{Proposition}
\newtheorem{prob}[thm]{Problem}
\newtheorem{example}[thm]{Example}
\newtheorem{lem}[thm]{Lemma}
\newtheorem{false statement}{False statement}
\newtheorem{cor}[thm]{Corollary}
\theoremstyle{definition}
\newtheorem{defn}[thm]{Definition}
\newtheorem{claim}{Claim}
\def\hh{\mathcal{H}}
\def\hf{\mathcal{F}}
\def\hg{\mathcal{G}}
\def\hht{\mathcal{T}}
\def\ha{\mathcal{A}}
\def\hb{\mathcal{B}}
\def\hs{\mathcal{S}}
\def\hi{\mathcal{I}}
\begin{document}
\title{\bf\Large Intersections and Distinct Intersections in Cross-intersecting Families}
\date{}
\author{Peter Frankl$^1$, Jian Wang$^2$\\[10pt]
$^{1}$R\'{e}nyi Institute, Budapest, Hungary\\[6pt]
$^{2}$Department of Mathematics\\
Taiyuan University of Technology\\
Taiyuan 030024, P. R. China\\[6pt]
E-mail:  $^1$peter.frankl@gmail.com, $^2$wangjian01@tyut.edu.cn
}

\maketitle

\begin{abstract}
Let $\hf,\hg$ be two cross-intersecting families of $k$-subsets of $\{1,2,\ldots,n\}$. Let $\hf\wedge \hg$, $\hi(\hf, \hg)$ denote the families of all intersections $F\cap G$ with $F\in \hf,G\in \hg$, and all distinct intersections $F\cap G$ with $F\neq G, F\in \hf,G\in \hg$, respectively. For a fixed $T\subset \{1,2,\ldots,n\}$, let $\hs_T$ be the family  of all $k$-subsets of $\{1,2,\ldots,n\}$ containing $T$. In the present paper, we show that $|\hf\wedge \hg|$ is maximized when $\hf=\hg=\hs_{\{1\}}$ for $n\geq 2k^2+8k$, while surprisingly  $|\hi(\hf, \hg)|$ is maximized when $\hf=\hs_{\{1,2\}}\cup \hs_{\{3,4\}}\cup \hs_{\{1,4,5\}}\cup \hs_{\{2,3,6\}}$ and $\hg=\hs_{\{1,3\}}\cup \hs_{\{2,4\}}\cup \hs_{\{1,4,6\}}\cup \hs_{\{2,3,5\}}$ for $n\geq 100k^2$. The maximum number of distinct intersections in a $t$-intersecting family is determined for $n\geq 3(t+2)^3k^2$ as well.

\end{abstract}
\section{Introduction}

Let $n,k$ be positive integers and let $[n]=\{1,2,\ldots,n\}$ denote the standard $n$-element set.  Let $\binom{[n]}{k}$ denote the collection of all $k$-subsets of $[n]$. Subsets of $\binom{[n]}{k}$ are called {\it $k$-uniform hypergraphs} or {\it $k$-graphs} for short.  A $k$-graph $\hf$ is called {\it intersecting} if $F\cap F'\neq \emptyset$ for all $F,F'\in \hf$. For a fixed set $T\subset [n]$, define the {\it $T$-star} $\hs_T$ by $\hs_T=\{S\in \binom{[n]}{k}\colon T\subset S\}$. We often write $\hs_{p}$, $\hs_{pq}$ and $\hs_{pqr}$ for $\hs_{\{p\}}$, $\hs_{\{p,q\}}$ and $\hs_{\{p,q,r\}}$, respectively. One of the most fundamental theorems in extremal set theory is the following:

\vspace{6pt}
{\noindent\bf Erd\H{o}s-Ko-Rado Theorem (\cite{ekr}).} Suppose that $n\geq 2k$ and $\hf\subset \binom{[n]}{k}$ is intersecting. Then
\begin{align}\label{ineq-ekr}
|\hf| \leq \binom{n-1}{k-1}.
\end{align}
\vspace{3pt}

Hilton and Milner \cite{HM67} proved  that $\hs_1$ is the only
family  that achieves equality in \eqref{ineq-ekr}  up to isomorphism for $n > 2k$.

Two families $\hf,\hg\subset \binom{[n]}{k}$ are called {\it cross-intersecting} if any two sets $F\in \hf, G\in \hg$ have non-empty intersection. If $\ha\subset \binom{[n]}{k}$ is intersecting, then $\hf=\ha$, $\hg=\ha$ are cross-intersecting. Therefore the following result is a strengthening of \eqref{ineq-ekr}.

\begin{thm}[\cite{Pyber86}]
Suppose that $n\geq 2k$ and $\hf,\hg\subset \binom{[n]}{k}$ are cross-intersecting. Then
\begin{align}\label{ineq-pyber}
|\hf||\hg|\leq \binom{n-1}{k-1}^2.
\end{align}
\end{thm}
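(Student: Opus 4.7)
The plan is to use Katona's cyclic permutation method. For a cyclic arrangement $\pi$ of $[n]$, call a $k$-subset an \emph{arc} of $\pi$ if its elements occupy $k$ consecutive positions on the cycle, and label the $n$ arcs cyclically as $A_0,A_1,\ldots,A_{n-1}$ so that $A_i\cap A_j\neq\emptyset$ iff their cyclic index distance is strictly less than $k$. Set $a(\pi)=|\{F\in\hf:F\text{ is an arc of }\pi\}|$ and $b(\pi)=|\{G\in\hg:G\text{ is an arc of }\pi\}|$.

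The crucial arc lemma I would establish is: for $n\geq 2k$ and cross-intersecting arc families $\ha,\hb$ with both non-empty, $|\ha|+|\hb|\leq 2k$. To prove it, fix some $A_0\in\ha$; since every $\hb$-arc must meet $A_0$, the $\hb$-arc indices lie in the window $\{-(k-1),\ldots,k-1\}$ of length $2k-1$, which does not wrap since $n\geq 2k$. Writing these indices as $j_1<\cdots<j_b$, every $\ha$-arc must in turn meet both $A_{j_1}$ and $A_{j_b}$, confining its index to a window of length $2k-1-(j_b-j_1)$. Combining with $b\leq j_b-j_1+1$ gives $|\ha|+|\hb|\leq 2k$, hence by AM-GM $|\ha|\cdot|\hb|\leq k^2$. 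In particular, $a(\pi)\,b(\pi)\leq k^2$ for every $\pi$ (trivially when either side is empty).

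Summing over the $(n-1)!$ cyclic arrangements and invoking the standard double counts $\sum_\pi a(\pi)=|\hf|\,k!(n-k)!$ and $\sum_\pi b(\pi)=|\hg|\,k!(n-k)!$, one hopes to chain the pointwise bound with Chebyshev's sum inequality to obtain
\[
\Bigl(\sum_\pi a(\pi)\Bigr)\Bigl(\sum_\pi b(\pi)\Bigr) \;\leq\; (n-1)!\sum_\pi a(\pi)\,b(\pi) \;\leq\; k^2\bigl((n-1)!\bigr)^2,
\]
which after division by $(k!(n-k)!)^2$ yields exactly $|\hf||\hg|\leq\binom{n-1}{k-1}^2$. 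The first inequality presupposes that $a(\pi)$ and $b(\pi)$ are similarly ordered as functions of $\pi$.

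The main obstacle is this similar-ordering (nonnegative covariance) assumption. It holds in the tight case $\hf=\hg=\hs_1$, where $a\equiv b\equiv k$, but can fail in general: for instance $\hf=\{[k]\}$ paired with a Hilton--Milner-type $\hg$ yields $a(\pi)$ and $b(\pi)$ that are anti-correlated across cyclic arrangements. To close this gap I would use compression: applying the same sequence of shifts to $\hf$ and $\hg$ preserves cardinalities and cross-intersection, so WLOG both families are shifted; then either both lie inside the star $\hs_1$ — making the Erd\H{o}s--Ko--Rado theorem directly applicable and giving $|\hf|,|\hg|\leq\binom{n-1}{k-1}$ — or structural facts about shifted families (the presence of $[k]$ in both, and the forced presence of $\{2,\ldots,k+1\}$ in whichever family is not a sub-star of $\hs_1$) cut down the possibilities enough to close the product bound by a case analysis or a short induction on $n$.
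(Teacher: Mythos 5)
The paper only cites Pyber's theorem without proof, so there is no internal argument to compare against; I will assess your attempt on its own terms. Your proposal contains a genuine gap — in fact two.

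First, the arc lemma. The claim $|\ha|+|\hb|\leq 2k$ for nonempty cross-intersecting arc families on an $n$-cycle with $n\geq 2k$ is correct, but your proof of it is not: you bound the $\ha$-indices by the \emph{direct} overlap $[j_b-(k-1),\,j_1+(k-1)]$ of the two windows around $A_{j_1}$ and $A_{j_b}$, ignoring the possibility that the windows also overlap on the far side of the cycle. This wrap-around occurs whenever $j_b-j_1\geq n-2k+2$, which is possible for all $n\leq 4k-4$; in particular for $n=2k$ with $j_1=-(k-1)$, $j_b=k-1$ your bound asserts $|\ha|\leq 1$ while the intersection of the two windows actually has size $2k-2$. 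The lemma can still be salvaged (for $n=2k$ the disjointness graph on the $n$ arcs is a perfect matching, and cross-independence across a matching edge permits at most two of the four indicator values; the range $2k<n<4k-3$ needs its own argument), but the sketch as written is wrong on exactly the range $n\in[2k,4k-4]$ that the theorem is supposed to cover.

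Second, and more fundamentally, the Chebyshev step. As you yourself note, the inequality $\bigl(\sum_\pi a(\pi)\bigr)\bigl(\sum_\pi b(\pi)\bigr)\leq (n-1)!\sum_\pi a(\pi)b(\pi)$ requires $a$ and $b$ to be similarly ordered over the set of cyclic permutations, and this is false in general. The example $\hf=\{F_0\}$, $\hg=\{G:G\cap F_0\neq\emptyset\}$ makes $a(\pi)\in\{0,1\}$ and forces $b(\pi)\leq 2k-1$ precisely when $a(\pi)=1$, while $b(\pi)$ is unconstrained when $a(\pi)=0$: the two functions are \emph{anti}-correlated, and Chebyshev gives the reverse inequality. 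This is not a technicality you can wave away. A pointwise bound $a(\pi)b(\pi)\leq k^2$ controls $\sum_\pi a(\pi)b(\pi)$, but it says nothing about the product of the two marginal sums unless you can control the covariance, and the covariance genuinely has the wrong sign for asymmetric pairs $(\hf,\hg)$, which are exactly the pairs that make the product version of EKR nontrivial. Your proposed remedy — shift both families and then ``close the product bound by a case analysis or a short induction'' — is not a patch to the cyclic argument but a placeholder for an entirely different proof (essentially Pyber's original shifting argument), and none of that proof is actually supplied. As it stands, the argument does not establish the theorem.
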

Let us introduce the central notion of the present paper.
\begin{defn}
For $\hf,\hg\subset \binom{[n]}{k}$ define
\[
\hf\wedge \hg =\{F\cap G\colon F\in\hf, G\in\hg\}\ \mbox{ and } \hi(\hf, \hg) =\{F\cap G\colon F\in\hf, G\in\hg, F\neq G\}.
\]
Clearly $\hf\wedge \hg=(\hf\cap \hg)\cup \hi(\hf,\hg)$. For $\hf=\hg$, we often write $\hi(\hf)$ instead of $\hi(\hf,\hf)$.
\end{defn}

The first result of the present paper shows another extremal property of the full star.
\begin{thm}\label{main-1}
Suppose that $n\geq 2k^2+8k$, $\hf,\hg\subset \binom{[n]}{k}$ are cross-intersecting. Then
\begin{align}\label{ineq-1-1}
|\hf\wedge \hg|\leq \sum_{0\leq i\leq k-1}\binom{n-1}{i}
\end{align}
where equality holds if and only if $\hf=\hg=\hs_{1}$ up to isomorphism.
\end{thm}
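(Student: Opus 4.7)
My plan is a dichotomy on whether $\hf\cup\hg$ admits a common star element, coupled with a level-wise bound on $\hf\wedge\hg$. For the easy case, suppose some $x\in[n]$ lies in every $F\in\hf$ and every $G\in\hg$. Then every non-empty intersection $F\cap G$ contains $x$ and has size at most $k$, whence
\[
|\hf\wedge\hg|\le|\{T\subseteq[n]:x\in T,\ 1\le|T|\le k\}|=\sum_{j=1}^{k}\binom{n-1}{j-1}=\sum_{i=0}^{k-1}\binom{n-1}{i}.
\]
For equality, every such $T$ must appear as some $F\cap G$; the $k$-subsets $T\ni x$ in particular can only arise from $F=G=T$, so $T\in\hf\cap\hg$, and thus $\hf=\hg=\hs_x$.

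The harder case is when no such common $x$ exists; here I aim for a strict inequality. Writing $L_i=(\hf\wedge\hg)\cap\binom{[n]}{i}$ for $1\le i\le k$, I record three ingredients. First, $L_k=\hf\cap\hg$ is intersecting, so $|L_k|\le\binom{n-1}{k-1}$ by Erd\H{o}s--Ko--Rado. Second, for $1\le i<k$ each $T\in L_i$ is an $i$-subset of some $F\in\hf$, yielding the shadow bound $|L_i|\le\binom{k}{i}\min(|\hf|,|\hg|)$. Third, the no-common-element hypothesis should give a cross-Hilton--Milner-type bound $\min(|\hf|,|\hg|)\le c(k)\binom{n-2}{k-2}$ for some polynomial factor $c(k)$; this I plan to establish by simultaneously shifting $\hf$ and $\hg$ (shifting preserves cross-intersection) and then analysing sets missing the first few coordinates, much as in the original Hilton--Milner proof. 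Combining all three bounds under the hypothesis $n\ge 2k^2+8k$ should force $\sum_i|L_i|$ strictly below the target $\sum_{i=0}^{k-1}\binom{n-1}{i}$.

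The main obstacle will be the combinatorial tension between these bounds. The shadow bound overcounts $|L_i|$ by roughly $2^k$, so to cancel this factor one needs a quite sharp cross-Hilton--Milner bound on $\min(|\hf|,|\hg|)$. Two further complications are: (i) shifting may \emph{decrease} $|\hf\wedge\hg|$ (small examples confirm this), so shifting cannot be applied to the original extremal problem directly, but only as an auxiliary tool inside the cross-HM step; and (ii) even when $\hf\subseteq\hs_x$ yet $\hg\not\subseteq\hs_x$, one must carefully account for the contribution of intersections $F\cap G$ avoiding $x$ arising from $G\in\hg\setminus\hs_x$, which can form a substantial second population of small-level sets. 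The threshold $n\ge 2k^2+8k$ is calibrated so that the ratio $\binom{n-1}{k-1}/\binom{n-2}{k-2}=(n-1)/(k-1)$ outruns the shadow-bound loss, and matching these two scales is the technical heart of the argument.
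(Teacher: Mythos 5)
Your easy case (a common element $x$ in every $F\in\hf$ and every $G\in\hg$) and its equality analysis are fine, and are essentially the conclusion of the paper's first case (the paper arrives at this configuration via the Hilton--Milner theorem applied to $\hf\cap\hg$, which is cleaner because it directly sidesteps the complication (ii) you flag).

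The hard case has a genuine gap that does not look patchable along the lines you describe. The shadow bound $|L_i|\le\binom{k}{i}\min(|\hf|,|\hg|)$ summed over $1\le i\le k-1$ yields a total of $(2^k-2)\min(|\hf|,|\hg|)$. The budget for these levels is $\sum_{0\le i\le k-2}\binom{n-1}{i}\approx\binom{n-1}{k-2}\approx\binom{n-2}{k-2}$ for $n$ in the regime of interest. But the best cross--Hilton--Milner bound one can hope for is $\min(|\hf|,|\hg|)\le\binom{n-1}{k-1}-\binom{n-k-1}{k-1}+1\approx k\binom{n-2}{k-2}$, and this is attained (e.g.\ $\hf=\hg$ a Hilton--Milner family, which has no common element). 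Thus you would need $2^k\cdot k\le O(1)$, which fails for every $k\ge 2$; no refinement of the cross--HM step can close an exponential gap. Even restricting the shadow bound to the single level $i=k-1$ (and using the trivial count $\binom{n}{i}$ for lower levels), you get $k\min(|\hf|,|\hg|)\approx k^2\binom{n-2}{k-2}$, which exceeds the level-$(k-1)$ budget $\approx\binom{n-2}{k-2}$ by a factor $k^2$ and would force $n\gtrsim k^3$, not $n\ge 2k^2+8k$.

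The missing ingredient is a structural lemma on the $(k-1)$-level itself. The paper proves (its Proposition 2.1) that $\hi(\hf,\hg)\cap\binom{[n]}{k-1}$ has matching number at most $4$, by a short case analysis on five pairwise-disjoint $(k-1)$-sets $D_i=F_i\cap G_i$ and the $20$ cross-intersection conditions among the $F_i,G_j$. Combined with Frankl's bound $|\hh|\le\nu(\hh)\binom{n-1}{k-1}$, this gives $|L_{k-1}|\le 4\binom{n-1}{k-2}$, a constant-factor bound with no $\min(|\hf|,|\hg|)$ in it. For levels $\le k-2$ the paper simply uses $\sum_{0\le i\le k-2}\binom{n}{i}$, which is already small relative to the target once $n\gtrsim k^2$. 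With the additional bound $|\hf\cap\hg|\le\binom{n-1}{k-1}-\binom{n-k-1}{k-1}+1\le k\binom{n-2}{k-2}$ (available precisely because the easy case is excluded), the total is $(k+5)\binom{n-2}{k-2}+\sum_{0\le i\le k-2}\binom{n}{i}$, which is comfortably below $\sum_{0\le i\le k-1}\binom{n-1}{i}$ for $n\ge 2k^2+8k$. Your proposal has no substitute for this matching-number observation, and that is where it breaks down.
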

\begin{cor}
Suppose that $n\geq 2k^2+8k$, $\hf\subset \binom{[n]}{k}$ is intersecting. Then
\begin{align*}
|\hf\wedge \hf|\leq \sum_{0\leq i\leq k-1}\binom{n-1}{i}
\end{align*}
where equality holds if and only if $\hf=\hs_{1}$ up to isomorphism.
\end{cor}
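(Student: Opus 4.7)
Set $M := \sum_{i=0}^{k-1}\binom{n-1}{i}$. A direct count gives $\hs_1\wedge\hs_1 = \{S\subset[n]:\, 1\in S,\ 1\le|S|\le k\}$, of cardinality $M$. Hence if $\hf,\hg\subset\hs_x$ for some common $x$, then $|\hf\wedge\hg|\le|\hs_x\wedge\hs_x| = M$, and equality forces $\hf=\hg=\hs_x$ (the $k$-set layer of $\hs_x\wedge\hs_x$ is precisely $\hs_x$, and each $F\in\hs_x$ arises as an intersection only via $F\cap F$, so $F\in\hf\cap\hg$). The plan is to reduce every extremal configuration to this star case.

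Decompose $\hf\wedge\hg$ by intersection size: $(\hf\wedge\hg)_j := (\hf\wedge\hg)\cap\binom{[n]}{j}$, so $|\hf\wedge\hg|=\sum_{j=1}^{k}|(\hf\wedge\hg)_j|$. Since $|F\cap G|=k$ iff $F=G$, the top layer $(\hf\wedge\hg)_k=\hf\cap\hg$ is intersecting (any two of its members lie in both $\hf$ and $\hg$, hence cross-intersect by hypothesis). The Erd\H{o}s--Ko--Rado theorem bounds $|\hf\cap\hg|\le\binom{n-1}{k-1}$, and at equality the EKR uniqueness statement (valid since $n\ge 2k^2+8k>2k$) forces $\hf\cap\hg=\hs_x$ for some $x$. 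Then $\hs_x\subset\hg$, and any $F\in\hf$ with $x\notin F$ would be disjoint from some $G=\{x\}\cup G'\in\hs_x$ (take $G'\subset[n]\setminus(\{x\}\cup F)$, possible as $n\ge 2k$), contradicting cross-intersection; hence $\hf\subset\hs_x$, and symmetrically $\hg\subset\hs_x$, reducing to the star case.

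In the residual regime $|\hf\cap\hg|<\binom{n-1}{k-1}$, denote by $D := \binom{n-1}{k-1}-|\hf\cap\hg|$ the top-layer deficit and by $E := \sum_{j=1}^{k-1}|(\hf\wedge\hg)_j|-\sum_{j=0}^{k-2}\binom{n-1}{j}$ the lower-layer excess. The identity $|\hf\wedge\hg|=M-D+E$ reduces the theorem to showing $E<D$ for every cross-intersecting pair other than $\hf=\hg=\hs_1$. I would bound the lower layers via the inclusion $(\hf\wedge\hg)_j\subset\partial_{k-j}\hf\cap\partial_{k-j}\hg$ (with $\partial_s$ the lower $s$-shadow), refined by the fact that each such intersection is a subset of some $F\in\hf$ and of some $G\in\hg$; combined with Hilton--Milner-type sharpenings applied to $\hf\cap\hg$ (either to $\hf\cap\hg$ itself if it is not contained in any star, or via a cross-intersecting analysis with the missing star-members if it is a proper sub-star), the deficit satisfies $D\ge \binom{n-k-1}{k-1}-1$.

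The principal obstacle is comparing $D$ and $E$ in the residual regime. A priori $|\partial_{k-j}\hf\cap\partial_{k-j}\hg|$ can be of order $\binom{n}{j}$, far bigger than the star value $\binom{n-1}{j-1}$, so controlling $E$ requires carefully exploiting the cross-intersecting structure and the near-extremality of $\hf\cap\hg$, for instance after identifying a popular element via an averaging argument on degrees $d_\hf(x)+d_\hg(x)$. The hypothesis $n\ge 2k^2+8k$ is precisely the threshold at which the Hilton--Milner gap $\binom{n-k-1}{k-1}$ remains a positive fraction of $\binom{n-1}{k-1}$, while the cumulative lower-layer excess grows only polynomially in $k$, so the inequality $E<D$ closes.
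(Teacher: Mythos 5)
The statement you were asked to prove is the corollary, which the paper obtains immediately by applying Theorem~\ref{main-1} with $\hg=\hf$; your proposal instead re-derives the full cross-intersecting Theorem~\ref{main-1}, which is fine in principle but makes the task harder. Your handling of the top layer and the near-extremal case is correct: $(\hf\wedge\hg)\cap\binom{[n]}{k}=\hf\cap\hg$ is intersecting, EKR/Hilton--Milner forces it into a star when it is large, and cross-intersection then pushes $\hf\cup\hg$ into that star (this is precisely the paper's Case 1, though the paper works with the Hilton--Milner threshold $\binom{n-1}{k-1}-\binom{n-k-1}{k-1}+1$ rather than exact EKR equality, which is the right move since $\hf\cap\hg$ may be a proper sub-star). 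The deficit/excess reformulation $|\hf\wedge\hg|=M-D+E$ is a clean organizational device.

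The genuine gap is exactly where you flag it: in the residual regime you never supply an upper bound on the lower layers that closes the inequality $E<D$, and the naive inclusion $(\hf\wedge\hg)_j\subset\partial_{k-j}\hf\cap\partial_{k-j}\hg$ cannot do the job. In particular $|(\hf\wedge\hg)_{k-1}|$ is a priori of order $\binom{n}{k-1}$, comparable to $\binom{n-1}{k-1}$, while the star's $(k-1)$-layer contributes only $\binom{n-1}{k-2}$, which is smaller by a factor of roughly $n/k$; so for $n\gg k^2$ the candidate excess swamps any available deficit. Your suggested remedies (Hilton--Milner sharpenings for $\hf\cap\hg$, an averaging argument locating a popular element) are heuristics, not a proof; neither controls $(\hf\wedge\hg)_{k-1}$ when $\hf\cap\hg$ is small or empty, and cross-intersection alone does not force a popular vertex. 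The ingredient you are missing is the paper's Proposition~\ref{prop2-1}: the $(k-1)$-uniform family $\hi(\hf,\hg)\cap\binom{[n]}{k-1}$ always has matching number at most~$4$. Combined with the bound $|\hh|\le\nu(\hh)\binom{n-1}{k-1}$ (inequality~\eqref{ineq-EMCup}), this caps the $(k-1)$-layer at $4\binom{n-1}{k-2}$, after which the remaining layers are bounded crudely by $\sum_{i\le k-2}\binom{n}{i}$ and the arithmetic closes for $n\ge 2k^2+8k$. Without that structural lemma (or a replacement of comparable strength), your plan does not yield a proof.
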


One would expect that both Theorem 1.3 and Corollary 1.4 hold for $n>ck$ for some absolute constant $c$. Unfortunately, we could not prove it. We can demonstrate the same results for $n>c'k^2/\log k$ with a more complicated proof.

Let us now consider the probably more natural quantity $|\hi(\hf,\hg)|$, namely the case that intersections of identical sets are not counted. Quite surprisingly the pairs of families maximizing $|\hi(\hf,\hg)|$ is rather peculiar. The fact that we can prove the optimality of such a pair shows the strength of our methods.

Let us define the two families
\[
\ha_1= \hs_{12}\cup \hs_{34}\cup \hs_{145}\cup \hs_{236}\mbox{ and } \ha_2=\hs_{13}\cup \hs_{24}\cup \hs_{146}\cup \hs_{235}.
\]
One can check that $\ha_1,\ha_2$ are cross-intersecting.
\begin{prop}
\begin{align}\label{eq-hihahb}
|\hi(\ha_1,\ha_2)|=&\quad 4\sum_{0\leq i\leq k-2}\binom{n-4}{i}+6\sum_{0\leq i\leq k-3}\binom{n-4}{i} +4\sum_{0\leq i\leq k-4}\binom{n-4}{i}\nonumber\\[5pt]
&\quad\quad\quad\quad+\sum_{0\leq i\leq k-5}\binom{n-4}{i}+2\sum_{i\leq i\leq k-3}\binom{n-6}{i}+\sum_{0\leq i\leq k-4}\binom{n-6}{i}.
\end{align}
\end{prop}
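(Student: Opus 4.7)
My plan is to enumerate $\hi(\ha_1,\ha_2)$ directly by classifying each intersection $I=F\cap G$ according to its trace $B:=I\cap\{1,2,3,4\}$. The realizability criterion I use is: $I\in\hi(\ha_1,\ha_2)$ iff there exist centers $C_1$ of $\ha_1$ and $C_2$ of $\ha_2$ with $C_1\cap C_2\subseteq I$, together with $k$-sets $F\supseteq C_1\cup I$, $G\supseteq C_2\cup I$ satisfying $F\cap G=I$ and $F\neq G$. Since all eight centers lie inside $[6]$ and $n\gg k$, the ``tails'' of $F$ and $G$ in $[n]\setminus[6]$ can always be chosen disjoint from each other and disjoint from the forbidden elements of $[6]$, so realizability reduces to the numerical conditions $|C_1\cup I|\leq k$, $|C_2\cup I|\leq k$, and $|I|\leq k-1$.

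In \emph{Case A} ($B\neq\emptyset$), write $I=B\cup E$ with $E\subseteq[n]\setminus\{1,2,3,4\}$ arbitrary. For every non-empty $B$ one can pick size-$2$ centers (from $\{\{1,2\},\{3,4\}\}$ for $\ha_1$ and $\{\{1,3\},\{2,4\}\}$ for $\ha_2$) meeting $B$ in a way that makes $|C_1\cup I|\leq|I|+1$ and $|C_2\cup I|\leq|I|+1$; the binding constraint is then $|I|=|B|+|E|\leq k-1$. Summing over the $\binom{4}{s}$ non-empty $B$'s of each size $s\in\{1,2,3,4\}$ contributes $\sum_{s=1}^{4}\binom{4}{s}\sum_{i=0}^{k-1-s}\binom{n-4}{i}$, which is exactly the first four terms.

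In \emph{Case B} ($B=\emptyset$), the inclusion $C_1\cap C_2\subseteq I\cap[6]\subseteq\{5,6\}$ together with a direct inspection of the sixteen pairs $(C_1,C_2)$ forces $(C_1,C_2)\in\{(\{1,4,5\},\{2,3,5\}),(\{2,3,6\},\{1,4,6\})\}$; hence $I\cap[6]\in\{\{5\},\{6\},\{5,6\}\}$, with the first two cases realized by exactly one pair and the third by either. The vanishing of $B$ forces, for instance, $\{1,4\}\subset F\setminus I$ when $C_1=\{1,4,5\}$, so the size constraint tightens to $3+|E|\leq k$ when $|I\cap[6]|=1$ and to $4+|E|\leq k$ when $I\cap[6]=\{5,6\}$. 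Taking $E\subseteq[n]\setminus[6]$ produces the final two terms $2\sum_{i=0}^{k-3}\binom{n-6}{i}+\sum_{i=0}^{k-4}\binom{n-6}{i}$.

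The main obstacle is the bookkeeping in Case B: one must verify that the two admissible center pairs listed above are the \emph{only} ones producing intersections with $B=\emptyset$, that the reduced upper bounds on $|E|$ cannot be circumvented by a different choice of $(C_1,C_2)$, and that the two distinct pairs realizing the same $I$ (which occurs precisely when $I\cap[6]=\{5,6\}$) do not cause over-counting. The remaining realizability checks for $n\gg k$---constructing disjoint tails in $[n]\setminus[6]$ that avoid the forbidden $[6]$-elements prescribed by the chosen centers---are routine.
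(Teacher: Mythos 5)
Your proposal is correct and follows essentially the same approach as the paper: classify each intersection $I = A_1 \cap A_2$ by its trace on $\{1,2,3,4\}$, count $\binom{4}{s}\sum_{i\le k-1-s}\binom{n-4}{i}$ intersections for each trace of size $s\geq 1$, and handle the empty-trace case separately via the two admissible center pairs $(\{1,4,5\},\{2,3,5\})$ and $(\{2,3,6\},\{1,4,6\})$, yielding the $\binom{n-6}{i}$ terms. Your write-up is somewhat more explicit than the paper's about the realizability argument (the disjoint-tails construction and the sixteen-pair check), but the underlying decomposition and the bounds on $|E|$ are identical.
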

\begin{proof}
For any $A_1\in \ha_1$ and $A_2\in \ha_2$,
there are $\binom{4}{1}\sum\limits_{0\leq i\leq k-2}\binom{n-4}{i}$ distinct intersections for $|A_1\cap A_2\cap \{1,2,3,4\}|=1$. There are $\binom{4}{2}\sum\limits_{0\leq i\leq k-3}\binom{n-4}{i}$ distinct intersections for $|A_1\cap A_2\cap \{1,2,3,4\}|=2$. There are $\binom{4}{3}\sum\limits_{0\leq i\leq k-4}\binom{n-4}{i}$ distinct intersections for $|A_1\cap A_2\cap \{1,2,3,4\}|=3$. There are $\sum\limits_{0\leq i\leq k-5}\binom{n-4}{i}$ distinct intersections for $|A_1\cap A_2\cap \{1,2,3,4\}|=4$. There are $2\sum\limits_{0\leq i\leq k-3}\binom{n-6}{i}$ distinct intersections for $|A_1\cap A_2\cap \{1,2,3,4\}|=0$ and $|A_1\cap A_2\cap \{5,6\}|=1$. There are $\sum\limits_{0\leq i\leq k-4}\binom{n-6}{i}$ distinct intersections for $|A_1\cap A_2\cap \{1,2,3,4\}|=0$ and $|A_1\cap A_2\cap \{5,6\}|=2$. Thus the proposition follows.
\end{proof}

Our main result shows that $|\hi(\hf,\hg)|$ is maximized by $\ha_1,\ha_2$ over all cross-intersecting families $\hf,\hg\subset \binom{[n]}{k}$ for $n\geq 100k^2$.

\begin{thm}\label{main-2}
 If $\hf, \hg\subset \binom{[n]}{k}$ are cross-intersecting families and $n\geq 100k^2$, then $|\hi(\hf,\hg)|\leq |\hi(\ha_1,\ha_2)|$.
\end{thm}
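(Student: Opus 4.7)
\emph{Saturation and generator decomposition.} Adding a $k$-set to $\hf$ while preserving cross-intersection with $\hg$ can only enlarge $\hi(\hf,\hg)$, so I may assume both $\hf$ and $\hg$ are maximally saturated: $\hf=\{F\in\binom{[n]}{k}:F\cap G\neq\emptyset\text{ for every }G\in\hg\}$ and symmetrically. Each such family decomposes as $\hf=\bigcup_{R\in\hr_\hf}\hs_R$ and $\hg=\bigcup_{R'\in\hr_\hg}\hs_{R'}$, where $\hr_\hf,\hr_\hg$ are the antichains of minimal generators (of size $\leq k$). A routine argument using $n\geq 2k$ shows that $\hf$ and $\hg$ are cross-intersecting if and only if every $R\in\hr_\hf$ meets every $R'\in\hr_\hg$; otherwise one can build disjoint $k$-sets $F\supset R$ and $G\supset R'$.

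\emph{Trichotomy on minimum generator size $t=\min_{R\in\hr_\hf\cup\hr_\hg}|R|$.} \textbf{(a)} If $t=1$, a singleton $\{p\}\in\hr_\hf$ forces every $R'\in\hr_\hg$ to contain $p$, so $\hg\subseteq\hs_p$, and saturation gives $\hf=\hg=\hs_p$. A direct count yields $|\hi(\hf,\hg)|=\sum_{0\leq i\leq k-2}\binom{n-1}{i}$, whose leading term $\binom{n-1}{k-2}$ is smaller than the target since, for $n\geq 100k^2$, $\binom{n-1}{k-2}/\binom{n-4}{k-2}=1+o(1)$ while $|\hi(\ha_1,\ha_2)|$ has leading coefficient $4$. \textbf{(b)} If $t\geq 3$, every intersection $I$ of size $\geq k-2$ must contain a $2$-subset of some generator on each side, whereas intersections of size $\leq k-3$ total at most $2\binom{n}{k-3}$. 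Combined with Hilton-Milner-type bounds on cross-intersecting $3$-graphs (to control $|\hr_\hf|,|\hr_\hg|$), these estimates give $|\hi(\hf,\hg)|\leq p(k)\binom{n}{k-3}$ for an explicit polynomial $p$; since $\binom{n}{k-3}/\binom{n-4}{k-2}=O(k/n)\leq 1/(100k)$, this is dominated by $|\hi(\ha_1,\ha_2)|$ when $n\geq 100k^2$. \textbf{(c)} If $t=2$, this is the main case.

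\emph{Main case.} Let $\hp_\hf=\{R\in\hr_\hf:|R|=2\}$ and $\hp_\hg=\{R'\in\hr_\hg:|R'|=2\}$, both nonempty and cross-intersecting as edge sets of graphs on $[n]$. If $\hp_\hf\cup\hp_\hg$ has a common vertex, $\hf$ and $\hg$ are close to a single star and the count falls below the target by arguments similar to~(a). Otherwise $\hp_\hf$ contains a matching, and after relabelling $\{1,2\},\{3,4\}\in\hp_\hf$, forcing $\hp_\hg\subseteq\{\{1,3\},\{1,4\},\{2,3\},\{2,4\}\}$. I partition $\hi(\hf,\hg)$ by the statistics $|I\cap[4]|\in\{0,1,2,3,4\}$ and (when $I\cap[4]=\emptyset$) $|I\cap\{5,6\}|\in\{1,2\}$ and count each class. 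The contribution of the pair-generators is maximized by $\hp_\hf=\{\{1,2\},\{3,4\}\}$ and $\hp_\hg=\{\{1,3\},\{2,4\}\}$ (up to isomorphism), which reproduces the first four terms of~\eqref{eq-hihahb}. The last two terms come from intersections with $I\cap[4]=\emptyset$, which can only arise from triple-generators; an exchange/replacement argument, respecting cross-intersection with the pairs already present, pins down the optimal choice as $\{1,4,5\},\{2,3,6\}\in\hr_\hf$ and $\{1,4,6\},\{2,3,5\}\in\hr_\hg$. Generators of size $\geq 4$ each contribute at most $O(\binom{n-4}{k-4})$ new intersections, absorbed by the hypothesis $n\geq 100k^2$.

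\emph{Main obstacle.} The technical heart of the argument is the optimization in the main case: showing that the four triples listed above are genuinely extremal and that no further augmentation of $\hr_\hf,\hr_\hg$ can beat them. Many candidate triples preserve cross-intersection with the fixed pair-bases, and one must carefully avoid double-counting intersections already credited to the pair-contribution while comparing the counts across all compatible configurations. The lower bound $n\geq 100k^2$ enters precisely to make the lower-order terms (from larger generators and from smaller intersections) strictly negligible with explicit constants, allowing the inequality to be closed off with concrete arithmetic.
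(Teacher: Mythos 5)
Your overall framework — saturation, decomposing both families into their antichains of minimal generators, and case analysis on the minimum generator size — matches the paper's strategy. However there are several concrete gaps, the first of which is a genuine error.

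In case (a), $t=1$, the claim ``saturation gives $\hf=\hg=\hs_p$'' is false. If $\{p\}\in\hr_\hf$ then $\hs_p\subseteq\hf$ and $\hg\subseteq\hs_p$, but $\hf$ may be strictly larger than $\hs_p$ and $\hg$ strictly smaller. For instance $\hf=\hs_p\cup\hs_q$, $\hg=\hs_{\{p,q\}}$ is a saturated cross-intersecting pair with $\{p\}\in\hr_\hf$, yet neither family equals $\hs_p$. The paper handles this case by observing $\hs_p\subseteq\hf$ and $\hg\subseteq\hs_p$, then splitting $\hi(\hf,\hg)$ into $\hi(\hs_p,\hs_p)$ plus the extra intersections generated by $\hf(\bar p)$ against $\hg(p)$; this second piece is controlled by a separate matching-number argument (Lemma~\ref{lem3-3} and Corollary~\ref{cor3-4}), which you would need some substitute for.

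In case (c) your dichotomy ``$\hp_\hf\cup\hp_\hg$ has a common vertex'' versus ``$\hp_\hf$ contains a matching'' is not exhaustive: both $\hp_\hf$ and $\hp_\hg$ can be triangles, in which case neither alternative holds (this is Case~3.1 in the paper, forcing $\hf=\hg=\ha(n,k,1)$ and requiring its own count). More seriously, the part you flag as ``the technical heart'' — pinning down the optimal triple generators and arguing that no augmentation beats them — is left as a sketch of an ``exchange/replacement argument.'' In the paper this step is not an exchange argument at all: after fixing $\hp_\hf=\{\{1,3\},\{2,4\}\}$, $\hp_\hg=\{\{1,2\},\{3,4\}\}$, the residual families $\hf',\hg'$ are shown to have trace $\{1,4\}$ or $\{2,3\}$ on $[4]$, and then the contribution of the residues is bounded using Lemma~\ref{lem2-2}, which is a dedicated inequality for $|(\hf_1\wedge\hg_1)\cup(\hf_2\wedge\hg_2)|$ built on Theorem~\ref{main-1} and Proposition~\ref{prop2-1}. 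Without a comparable tool, the main case is not closed. Finally, your case (b), $t\ge 3$, gestures at Hilton–Milner-type bounds on generator counts; the paper's actual mechanism is a weighted branching process (Lemma~\ref{lem3-2}) giving $\sum_\ell \ell^{-2}k^{-\ell+2}|\hb^{(\ell)}|\le 1$, which is what makes the contribution of long generators uniformly small — this quantitative control is needed rather than a qualitative dominance claim.
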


Let $n\geq k>t$. A family $\hf\subset \binom{[n]}{k}$ is called $t$-intersecting if any two members of it intersect in at least $t$ elements. Note that for $n\leq 2k-t$ the whole set $\binom{[n]}{k}$ is $t$-intersecting. Thus we always assume that $n>2k-t$ when considering extremal problems for $t$-intersecting families.

Define
\[
\ha(n,k,t) =\left\{A\in \binom{[n]}{k}\colon |A\cap [t+2]|\geq t+1\right\}.
\]
This family was first defined in \cite{F78} and it is easily seen to be $t$-intersecting.

\begin{prop}
\begin{align}\label{eq-1-2}
|\hi(\ha(n,k,t))|= \binom{t+2}{t}&\sum_{0\leq i\leq k-t-1}\binom{n-t-2}{i}+\binom{t+2}{t+1}\sum_{0\leq i\leq k-t-2}\binom{n-t-2}{i}\nonumber\\[5pt]
&+\sum_{0\leq i\leq k-t-3}\binom{n-t-2}{i}.
\end{align}
\end{prop}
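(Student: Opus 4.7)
The plan is to stratify $\hi(\ha(n,k,t))$ by the value $s := |I \cap [t+2]|$, where $I = A \cap B$ for some $A, B \in \ha(n,k,t)$ with $A \neq B$. The initial observation is that $s \in \{t, t+1, t+2\}$: the upper bound is trivial, and the lower bound follows from inclusion-exclusion inside $[t+2]$, which gives $s \geq |A \cap [t+2]| + |B \cap [t+2]| - (t+2) \geq 2(t+1) - (t+2) = t$. Writing $I = S \cup J$ with $S = I \cap [t+2]$ and $J = I \setminus [t+2]$, the three strata will be enumerated by first choosing $S$ as a subset of $[t+2]$ of size $s$, and then $J$ as a subset of $[n] \setminus [t+2]$ of some size $j$; their contributions will correspond exactly to the three summands of \eqref{eq-1-2}.

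For the stratum $s = t$, the set $S$ is one of $\binom{t+2}{t}$ $t$-subsets of $[t+2]$; writing $\{p,q\} = [t+2] \setminus S$, the constraint $A \cap B \cap [t+2] = S$ combined with $|A \cap [t+2]|, |B \cap [t+2]| \geq t+1$ forces $A \cap \{p,q\}$ and $B \cap \{p,q\}$ to be distinct singletons. Any $J$ with $|J| = j \in \{0, \ldots, k-t-1\}$ is then realized by completing $S \cup \{p\} \cup J$ and $S \cup \{q\} \cup J$ to $k$-sets with disjoint extensions in $[n] \setminus [t+2] \setminus J$. For $s = t+1$, $S$ ranges over the $\binom{t+2}{t+1}$ $(t+1)$-subsets of $[t+2]$, and with $\{p\} = [t+2] \setminus S$ at most one of $A, B$ may contain $p$; the analogous construction realizes every $j \in \{0, \ldots, k-t-2\}$, the cap $j \leq k-t-2$ arising from the fact that $|I| = k$ would force $A = B$. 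For $s = t+2$, $[t+2] \subset A \cap B$ is forced, so only $J$ is free, and $j$ ranges over $\{0, \ldots, k-t-3\}$ by the same reason.

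The main (and only nontrivial) step is to verify realizability at the boundary value $j = 0$ in each case: the constructions above require disjoint $(k-t-1-j)$- or $(k-t-2-j)$-subsets of $[n] \setminus [t+2] \setminus J$, which exist precisely because $n > 2k-t$ guarantees $n - t - 2 \geq 2(k-t-1)$, with slack at all larger $j$. Once realizability is confirmed in all three strata, summing $\binom{t+2}{t}\sum_{j=0}^{k-t-1}\binom{n-t-2}{j}$, $\binom{t+2}{t+1}\sum_{j=0}^{k-t-2}\binom{n-t-2}{j}$, and $\sum_{j=0}^{k-t-3}\binom{n-t-2}{j}$ recovers \eqref{eq-1-2} term-by-term.
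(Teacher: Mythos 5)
Your proposal is correct and follows essentially the same approach as the paper: stratifying the intersections $I = A\cap B$ by the value $s = |I\cap[t+2]| \in \{t, t+1, t+2\}$ and counting $I = S \cup J$ with $S\subset[t+2]$, $J\subset[n]\setminus[t+2]$ in each stratum. You supply the realizability checks (that the extensions exist when $n>2k-t$, and that the cap on $|J|$ correctly excludes the case $A=B$) which the paper's terse proof takes for granted; incidentally your derivation of the bound $j\le k-t-3$ in the $s=t+2$ stratum matches the stated formula, whereas the paper's prose contains a typo ($k-t-2$ in place of $k-t-3$).
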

\begin{proof}
For any $A_1,A_2\in \ha(n,k,t)$, we have $|A_1\cap A_2\cap [t+2]|\geq t$. Note that $|A_i\cap [t+2]|\geq t+1$ for $i=1,2$. There are $\binom{t+2}{t}\sum\limits_{0\leq i\leq k-t-1}\binom{n-t-2}{i}$ distinct intersections for $|A_1\cap A_2\cap [t+2]|=t$. There are $\binom{t+2}{t+1}\sum\limits_{0\leq i\leq k-t-2}\binom{n-t-2}{i}$ distinct intersections for $|A_1\cap A_2\cap [t+2]|=t+1$. There are $\sum\limits_{0\leq i\leq k-t-2}\binom{n-t-2}{i}$ distinct intersections for $|A_1\cap A_2\cap [t+2]|=t+2$. Thus the proposition follows.
\end{proof}

Our third result shows that $|\hi(\hf)|$ is maximized by $\ha(n,k,t)$  over all intersecting families $\hf\subset\binom{[n]}{k}$ for $n\geq 3(t+2)^3k^2$.

\begin{thm}\label{main-3}
 If $\hf\subset \binom{[n]}{k}$ is a $t$-intersecting family and $n\geq 3(t+2)^3k^2$, then $\hi(\hf)\leq |\hi(\ha(n,k,t))|$.
\end{thm}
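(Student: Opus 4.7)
I plan to classify $\hf$ using two parameters: the minimum pairwise intersection size $s=\min\{|F\cap F'|:F\neq F'\in\hf\}\geq t$ and the common intersection $K=\bigcap_{F\in\hf}F$. The argument then splits into two cases according to whether $|K|\geq s$ or $|K|<s$.

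\emph{Case A ($|K|\geq s$).} Pick $K'\subseteq K$ with $|K'|=s$; then $\hf\subseteq\hs_{K'}$, and every $I\in\hi(\hf)$ satisfies $K'\subseteq I\subsetneq F$ for some $F\in\hf$, so
$$|\hi(\hf)|\leq\sum_{0\leq i\leq k-s-1}\binom{n-s}{i}\leq\sum_{0\leq i\leq k-t-1}\binom{n-t}{i}.$$
A direct comparison with \eqref{eq-1-2} closes this case: the leading term $\binom{t+2}{2}\binom{n-t-2}{k-t-1}$ of \eqref{eq-1-2} exceeds $\binom{n-t}{k-t-1}$ by a factor roughly $\binom{t+2}{2}\geq 3$, while for $n\geq 3(t+2)^3k^2$ the tail of the upper bound is controlled by twice the top term and the ratio $\binom{n-t}{k-t-1}/\binom{n-t-2}{k-t-1}$ is close to $1$.

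\emph{Case B ($|K|<s$).} Pick $F_1,F_2\in\hf$ with $|F_1\cap F_2|=s$ and set $T_0=F_1\cap F_2$. Since $T_0\not\subseteq K$, there is $F_3\in\hf$ with $T_0\not\subseteq F_3$; the constraints $|F_3\cap F_i|\geq s$ for $i=1,2$ together with $|F_3\cap T_0|\leq s-1$ then force the existence of $x\in F_3\cap(F_1\setminus T_0)$ and $y\in F_3\cap(F_2\setminus T_0)$. Define the distinguished $(s+2)$-set $T=T_0\cup\{x,y\}$ and classify $F\in\hf$ by $j=|F\cap T|$. The ``main'' part $\hf_{\geq s+1}=\{F:|F\cap T|\geq s+1\}$ contributes intersections $I$ with $|I\cap T|\geq s$, and by the counting of \eqref{eq-1-2} (with $t$ replaced by $s$) the number of such $I$ with $|I|\leq k-1$ is
$$\sum_{s\leq j\leq s+2}\binom{s+2}{j}\sum_{0\leq i\leq k-1-j}\binom{n-s-2}{i}.$$
Since $s\geq t$ and the right-hand side of \eqref{eq-1-2} is decreasing in $t$ for $n\gg k$ (the dominant ratio being $\frac{s+3}{s+1}\cdot\frac{k-s-1}{n-s-2}\to 0$), this quantity is at most $|\hi(\ha(n,k,t))|$. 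The remaining contribution comes from pairs involving the exceptional part $\hf_{\leq s}=\{F:|F\cap T|\leq s\}$; using the $s$-intersection constraints against the anchors $F_1,F_2,F_3$ one bounds $|\hf_{\leq s}|$ and shows the number of new distinct intersections it generates is absorbed by the slack in the main term under $n\geq 3(t+2)^3k^2$.

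\textbf{Main obstacle.} The delicate step is Case B, specifically bounding $|\hf_{\leq s}|$ and the precise number of additional distinct intersections produced by pairs touching it. A naive estimate of the form $|\hf_{\leq s}|\cdot 2^k$ for new intersections is far too weak; one needs a local Ahlswede--Khachatrian-type estimate that (i) bounds $|\hf_{\leq s}|$ in terms of $\binom{n-s-2}{k-s-2}$ using the anchor constraints, and (ii) observes that for each $F\in\hf_{\leq s}$ only $O(k^{t+1})$ distinct intersections $F\cap F'$ have $|F\cap F'\cap T|<s$, since each such intersection is determined by a subset of $F\setminus F'$ of size $\leq k-t$. The cubic factor $(t+2)^3$ in the threshold $n\geq 3(t+2)^3k^2$ reflects precisely the balance between the $\binom{t+2}{2}\binom{n-t-2}{k-t-1}$ main term and the error terms of order $\binom{n-s-2}{k-s-2}$ coming from the exceptional sets.
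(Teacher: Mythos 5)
Your proposal takes a genuinely different route from the paper: where you classify $\hf$ by the minimum pairwise intersection size $s$, construct an $(s{+}2)$-set $T$ from three ``anchor'' sets, and split by $|F\cap T|$, the paper works with a saturated family, passes to the basis $\hb_t(\hf)$ of minimal $t$-transversals, partitions $\hf$ by the largest basis element contained in each set, and uses a branching process (Lemma~4.2) to prove the weighted inequality $\sum_{\ell}\bigl(\binom{\ell}{t}\ell k^{\ell-t-1}\bigr)^{-1}|\hb^{(\ell)}|\leq 1$, which is then combined with a monotonicity estimate for the per-level contribution. Your Case~A is fine and corresponds (in substance) to the paper's opening reduction via $|\hi(\hs_{[t]})|<|\hi(\ha(n,k,t))|$.

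The genuine gap is in Case~B, precisely where you flag it, and your proposed fix does not close it. The assertion that for each $F\in\hf_{\leq s}$ only $O(k^{t+1})$ distinct intersections $F\cap F'$ with $|F\cap F'\cap T|<s$ can occur is unjustified and, as written, false: for fixed $F$, the intersection $F\cap F'$ is a subset of $F$ of size at least $t$, hence is determined by the complement $F\setminus F'\subseteq F$ of size at most $k-t$, and there are $\sum_{0\leq j\leq k-t}\binom{k}{j}$ such complements --- this is not polynomial in $k$ of bounded degree unless one exploits much more of the structure of $\hf$. Even granting a bound $|\hf_{\leq s}|=O\bigl(\binom{n-s-2}{k-s-2}\bigr)$ as you suggest, multiplying by the true per-$F$ count of possible intersections would demand $n\gg 2^k$, far beyond $n\geq 3(t{+}2)^3k^2$. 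More fundamentally, the classification by $|F\cap T|$ for a single $(s{+}2)$-set $T$ is too coarse: $\hf_{\leq s}$ can be very rich when the basis $\hb_t(\hf)$ contains many sets of moderate size, and nothing in your three-anchor argument controls how many such basis elements there are or how many distinct intersections they generate. The paper's Lemma~4.2 is exactly the ingredient that supplies this control --- it bounds all $|\hb^{(\ell)}|$ simultaneously in a weighted sense --- and without an analogue of it your estimate on the exceptional contribution cannot be made to work at the stated threshold.
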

We should mention that this result was proved for the case $t=1$ in \cite{FKK2022}.

 Let us list some notions and results that we need for the  proofs.  Define the family of $t$-transversals of $\hf\subset\binom{[n]}{k}$:
\[
\hht_t(\hf) = \left\{T\subset [n]\colon |T|\leq k, |T\cap F|\geq t  \mbox{ for all } F\in \hf\right\}.
\]
Clearly,  if $\hf$ is $t$-intersecting then $\hf\subset \hht_t(\hf)$ and vice versa. The $t$-covering number $\tau_t(\hf)$ is defined as follows:
\[
\tau_t(\hf)=\min\{|T|\colon |T\cap F|\geq t \mbox{ for all }F\in \hf\}.
\]
For $t=1$, we often write $\hht(\hf), \tau(\hf)$ instead of $\hht_1(\hf),\tau_1(\hf)$, respectively.
If $\hf,\hg$ are cross-intersecting, then clearly $\hf\subset \hht(\hg)$ and $\hg\subset \hht(\hf)$.

Let us recall the following common notations:
$$\hf(i)=\{F\setminus\{i\}\colon i\in F\in \hf\}, \qquad \hf(\bar{i})= \{F\in\hf: i\notin F\}.$$
Note that $|\hf|=|\hf(i)|+|\hf(\bar{i})|$.

Define $\nu(\hf)$, the {\it matching number} of $\hf$ as the maximum number of pairwise disjoint edges in $\hf$. Note that $\nu(\hf)=1$ iff $\hf$ is intersecting. We need the following inequality generalising the Erd\H{o}s-Ko-Rado Theorem.

\begin{prop}[\cite{F87}]
Suppose that $\hf\subset \binom{[n]}{k}$ then
\begin{align}\label{ineq-EMCup}
|\hf|\leq \nu(\hf)\binom{n-1}{k-1}.
\end{align}
\end{prop}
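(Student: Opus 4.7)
The natural approach is the shifting technique. The left-shift operator $s_{ij}$ for $1\le i<j\le n$, lifted to families in the standard way, satisfies two facts I would invoke: $|s_{ij}(\hf)|=|\hf|$ and $\nu(s_{ij}(\hf))\le \nu(\hf)$. Iterating shifts until stabilization reduces the problem to the case that $\hf$ is shifted; set $\nu:=\nu(\hf)$.

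The heart of the argument is then a structural claim: \emph{in a shifted family of matching number $\nu$, every $F\in\hf$ meets $[\nu]$.} Granting the claim, since $\hf\subseteq \bigcup_{i\in[\nu]}\{F\in\hf: i\in F\}$ and $\hf(i)\subseteq \binom{[n]\setminus\{i\}}{k-1}$, a union bound gives
\[
|\hf|\;\le\;\sum_{i=1}^{\nu}|\hf(i)|\;\le\;\nu\binom{n-1}{k-1},
\]
which is the desired inequality. To prove the structural claim, I would argue by contradiction: suppose $F=\{a_1<\cdots<a_k\}\in \hf$ with $a_1>\nu$, and construct $M_1,\dots,M_\nu\in\hf$, pairwise disjoint and each disjoint from $F$, with $i\in M_i$; together with $F$ these form a matching of size $\nu+1$, contradicting $\nu(\hf)\le \nu$. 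Each $M_i$ is produced in the form $\{i\}\cup B_i$, with the $(k-1)$-sets $B_i$ placed into pairwise disjoint ``windows'' inside $[n]\setminus([\nu]\cup F)$; the membership $M_i\in\hf$ is certified by shiftedness through a chain of downward element replacements from a suitable template of $\hf$.

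The hard part is exactly this last point. When $F$ is ``dense''---say $F=\{\nu+1,\ldots,\nu+k\}$---a downward shift of $F$ alone is coordinate-wise confined to $[1,\nu]$, which has no room for $\nu$ pairwise disjoint $k$-sets. To get around this I would invoke other templates in $\hf$ (not merely $F$), either by induction on $k$ or via a delta-system/sunflower style decomposition, exploiting that any family violating the bound must be large. This also explains why a sufficiently large $n$ relative to $k$ and $\nu$ is implicitly required (as it is in the paper's applications): one needs enough vertices outside $[\nu]\cup F$ to house the $\nu$ disjoint windows carrying the $B_i$.
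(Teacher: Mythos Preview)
The paper does not supply its own proof of this proposition; it is quoted from \cite{F87}. On your attempt itself: the central structural claim --- that in a shifted $k$-graph with matching number $\nu$, every member meets $[\nu]$ --- is false, regardless of how large $n$ is. Take $k=2$ and the triangle $\hf=\{\{1,2\},\{1,3\},\{2,3\}\}$ on any $[n]$ with $n\ge 3$: this family is shifted and intersecting ($\nu=1$), yet $\{2,3\}\cap[1]=\emptyset$. No chain of downward shifts from sets in $\hf$ can produce an edge through $1$ that avoids $\{2,3\}$, because shifts only decrease elements; so the ``hard part'' you flagged is not a technicality but an actual obstruction, and the union bound over $[\nu]$ cannot deliver \eqref{ineq-EMCup}.

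Frankl's argument in \cite{F87} proceeds differently. After shifting, one writes $|\hf|=|\hf(n)|+|\hf(\bar n)|$ and inducts simultaneously on $n$ and $k$. The substantive step is that $\nu(\hf(n))\le\nu(\hf)$: given a hypothetical matching $G_1,\dots,G_{\nu+1}$ in $\hf(n)\subset\binom{[n-1]}{k-1}$, shiftedness at the vertex $n$ lets one replace $n$ in each $G_i\cup\{n\}\in\hf$ by distinct fresh elements of $[n-1]\setminus\bigcup_j G_j$, yielding $\nu+1$ pairwise disjoint members of $\hf$. This replacement needs $n-1-(\nu+1)(k-1)\ge\nu$, i.e.\ $n\ge k(\nu+1)$; indeed \eqref{ineq-EMCup} is false below that threshold (e.g.\ $\hf=\binom{[3]}{2}$ has $\nu=1$ but $|\hf|=3>2=\binom{2}{1}$). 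The paper's uses of \eqref{ineq-EMCup} all live safely above this range, so the omitted hypothesis does no harm there.
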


An intersecting family $\hf$ is called {\it non-trivial} if $\cap_{F\in \hf} F = \emptyset$. We also need the following stability theorem concerning the Erd\H{o}s-Ko-Rado Theorem.

\vspace{6pt}
{\noindent\bf Hilton-Milner Theorem (\cite{HM67}).} If $n> 2k$ and $\hf\subset \binom{[n]}{k}$ is non-trivial intersecting, then
\begin{align}\label{ineq-nontrival}
|\hf| \leq \binom{n-1}{k-1}-\binom{n-k-1}{k-1}+1.
\end{align}

Let us list some inequalities that will be used frequently in the proof.
\begin{prop}
Let $n,k,\ell,t,p$ be positive integers with $k> \ell$, $k> t$ and $n> 2k+p$. Then
\begin{align}
&\binom{n}{k}\leq \frac{n-p}{n-p (k+1)}\binom{n-p}{k},\label{ineq-4-0}\\[5pt]
&\sum_{0\leq i\leq k-\ell} \binom{n-t}{i} \leq \frac{n-t-p}{n-t-pk}\sum_{0\leq i\leq k-\ell} \binom{n-t-p}{i}, \label{ineq-4-1}\\[5pt]
&\sum_{0\leq i\leq k-\ell-1} \binom{n-t}{i} \leq \frac{k}{n-t-k}\sum_{0\leq i\leq k-\ell} \binom{n-t}{i},\label{ineq-4-2}\\[5pt]
& \mbox{ for }\ell\geq t+1,\ \sum_{t\leq j\leq \ell}\binom{\ell}{j}\geq \frac{1}{2t+2}\sum_{t\leq j\leq \ell+1}\binom{\ell+1}{j}.\label{ineq-4-6}
\end{align}
\end{prop}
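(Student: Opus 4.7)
The plan is to handle the four inequalities independently, as they are purely technical binomial estimates that do not interact. For \eqref{ineq-4-0} and \eqref{ineq-4-1} the common device is to write the binomial ratio as a product $\prod_{j=0}^{p-1}(1-x_j)$ and invoke Weierstrass' inequality $\prod(1-x_j) \geq 1 - \sum x_j$; for \eqref{ineq-4-2} a single termwise ratio comparison suffices; and \eqref{ineq-4-6} reduces via Pascal's rule to a one-line estimate.

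For \eqref{ineq-4-0} I would write
\[
\frac{\binom{n-p}{k}}{\binom{n}{k}} = \prod_{j=0}^{p-1}\left(1-\frac{k}{n-j}\right) \geq 1 - \sum_{j=0}^{p-1}\frac{k}{n-j} \geq 1 - \frac{pk}{n-p} = \frac{n-p-pk}{n-p},
\]
using Weierstrass once and then the crude bound $k/(n-j) \leq k/(n-p+1) \leq k/(n-p)$ for $j \leq p-1$, and then reciprocate. For \eqref{ineq-4-1} I would apply the same trick termwise: for each $0 \leq i \leq k-\ell$, Weierstrass gives $\binom{n-t-p}{i}/\binom{n-t}{i} \geq 1 - pi/(n-t-p+1)$, and since $i \leq k-\ell \leq k-1$ (using $\ell \geq 1$), a short cross-multiplication shows the right-hand side is at least $(n-t-pk)/(n-t-p)$; summing this pointwise bound over $i$ and reciprocating yields the claim.

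For \eqref{ineq-4-2} I would use the ratio $\binom{n-t}{i}/\binom{n-t}{i+1} = (i+1)/(n-t-i)$ and observe that for $0 \leq i \leq k-1$ one has $(i+1)(n-t-k) \leq k(n-t-i)$, i.e.\ $\binom{n-t}{i} \leq \frac{k}{n-t-k}\binom{n-t}{i+1}$; summing from $i=0$ to $k-\ell-1$ and shifting the index on the right immediately gives the inequality. For \eqref{ineq-4-6} I would use Pascal's rule to write
\[
\sum_{t\leq j\leq \ell+1}\binom{\ell+1}{j} = 2\sum_{t\leq j\leq\ell}\binom{\ell}{j} + \binom{\ell}{t-1},
\]
so that the claim reduces to $2t\sum_{t\leq j\leq\ell}\binom{\ell}{j} \geq \binom{\ell}{t-1}$; since $\ell \geq t+1$ implies $\binom{\ell}{t}/\binom{\ell}{t-1} = (\ell-t+1)/t \geq 2/t$, we get $\sum_{j\geq t}\binom{\ell}{j} \geq \binom{\ell}{t} \geq (2/t)\binom{\ell}{t-1}$, and the factor of $2t$ finishes with room to spare. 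No step is a real obstacle; the only subtlety is keeping the Weierstrass estimates in the regime $x_j \in [0,1)$ and the quantities $n-p-pk$, $n-t-pk$ positive, which is where the hypothesis $n > 2k+p$ (together with $k > \ell,t$) is used.
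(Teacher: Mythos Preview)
Your proof is correct. For \eqref{ineq-4-0}--\eqref{ineq-4-2} your argument is essentially identical to the paper's: the paper also writes the binomial ratio as a product of factors $1-k/(n-j)$ and applies a Bernoulli/Weierstrass-type bound, then deduces \eqref{ineq-4-1} termwise and \eqref{ineq-4-2} from the consecutive-ratio estimate, exactly as you do (modulo a harmless index shift).

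For \eqref{ineq-4-6}, however, your route is genuinely different and cleaner. The paper splits into two cases: for $\ell\geq 2t$ it uses the crude bounds $\sum_{j\geq t}\binom{\ell}{j}\geq 2^{\ell-1}$ and $\sum_{j\geq t}\binom{\ell+1}{j}\leq 2^{\ell+1}$ to get a ratio of $1/4$, while for $t+1\leq\ell\leq 2t$ it carries out a separate estimate via $\binom{\ell}{j}=\frac{\ell+1-j}{\ell+1}\binom{\ell+1}{j}$. Your Pascal-rule reduction $\sum_{j\geq t}\binom{\ell+1}{j}=2\sum_{j\geq t}\binom{\ell}{j}+\binom{\ell}{t-1}$ collapses both cases into the single inequality $2t\sum_{j\geq t}\binom{\ell}{j}\geq\binom{\ell}{t-1}$, which follows immediately from $\binom{\ell}{t}\geq(2/t)\binom{\ell}{t-1}$ when $\ell\geq t+1$. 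This avoids the case distinction entirely and even yields a slightly sharper constant.
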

\begin{proof}
Note that
\[
\frac{\binom{n-p}{k}}{\binom{n}{k}} =\frac{(n-k)(n-k-1)\cdots (n-k-p+1)}{n(n-1)\cdots (n-p+1)}\geq \left(1-\frac{k}{n-p}\right)^p \geq 1-\frac{p k}{n-p}.
\]
Then \eqref{ineq-4-0} holds. By \eqref{ineq-4-0}, we have for $i<k$
\[
\binom{n-t}{i}\leq \frac{n-t-p}{n-t-p (i+1)}\binom{n-t-p}{i}\leq \frac{n-t-p}{n-t-pk}\binom{n-t-p}{i},
\]
and thereby \eqref{ineq-4-1} follows. Since
\[
\binom{n-t}{i-1}/\binom{n-t}{i}=\frac{i}{n-t-i+1} \leq \frac{k}{n-t-k},
\]
 we obtain \eqref{ineq-4-2}.

For $\ell \geq 2t$, since
 \[
 \sum_{t\leq j\leq \ell}\binom{\ell}{j}\geq 2^{\ell-1} \mbox{ and }\sum_{t\leq j\leq\ell+1}\binom{\ell+1}{j}\leq 2^{\ell+1},
 \]
we see that
 \[
 \frac{\sum\limits_{t\leq j\leq\ell}\binom{\ell}{j}}{\sum\limits_{t\leq j\leq\ell+1}\binom{\ell+1}{j}}  \geq \frac{1}{4}.
 \]
 For $t+1\leq \ell \leq 2t$,
 \begin{align*}
  \sum_{t\leq j\leq \ell}\binom{\ell}{j}&\geq  \sum_{t\leq j\leq\ell}\frac{\ell+1-j}{\ell+1}\binom{\ell+1}{j}\\
  &\geq \sum_{t\leq j\leq\ell-1}\frac{\ell+1-j}{\ell+1}\binom{\ell+1}{j}+\frac{1}{\ell+1}\binom{\ell+1}{\ell}\\
  &\geq \frac{1}{\ell+1}\sum_{t\leq j\leq\ell-1}\binom{\ell+1}{j}+\frac{1}{\ell+2}\left(\binom{\ell+1}{\ell}+\binom{\ell+1}{\ell+1}\right)\\
  &> \frac{1}{2t+2}\sum_{t\leq j\leq\ell+1}\binom{\ell+1}{j}.
 \end{align*}
 Thus \eqref{ineq-4-6} holds.
\end{proof}

\section{Intersections in cross-intersecting families}

In this section, we determine the maximum size of $\hf\wedge \hg$ over all cross-intersecting families $\hf,\hg\subset \binom{[n]}{k}$. We also determine the maximum size of $(\hf_1\wedge \hg_1)\cup(\hf_2\wedge \hg_2)$ over all families $\hf_1,\hf_2,\hg_1,\hg_2\subset \binom{[n]}{k}$ with $\hf_1,\hg_1$ being cross-intersecting and $\hf_2,\hg_2$ being cross-intersecting. This result will be used in Section 3.

First we prove a key proposition to the proof of Theorem \ref{main-1}.

\begin{prop}\label{prop2-1}
Let $\hf,\hg\subset \binom{[n]}{k}$ be cross-intersecting families and set $\hh=\hi(\hf,\hg)\cap \binom{[n]}{k-1}$. Then $\nu(\hh)\leq 4$.
\end{prop}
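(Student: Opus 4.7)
The plan is to suppose $\nu(\hh)\ge 5$ and derive a contradiction by a local counting argument on the ``extra'' elements attached to each $H_i$. Pick pairwise disjoint $H_1,\dots,H_5\in\hh$; for each $i$ choose $F_i\in\hf$, $G_i\in\hg$ with $F_i\ne G_i$ and $F_i\cap G_i=H_i$, so $F_i=H_i\cup\{a_i\}$, $G_i=H_i\cup\{b_i\}$ with $a_i\ne b_i$ and $a_i,b_i\notin H_i$. The cross-intersecting hypothesis forces $F_i\cap G_j\ne\emptyset$ for all $20$ ordered pairs $(i,j)$ with $i\ne j$; since $H_i\cap H_j=\emptyset$, this intersection is contained in $\{a_i,b_j\}$, so each such pair is \emph{covered} by at least one of the three alternatives (A)\,$a_i\in H_j$, (B)\,$b_j\in H_i$, or (C)\,$a_i=b_j$.

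For each element $x\in[n]$ occurring as some $a_i$ or $b_j$, set $I_x=\{i:a_i=x\}$, $J_x=\{j:b_j=x\}$ and $m_x=|I_x|+|J_x|$. Since $a_i\ne b_i$ we have $I_x\cap J_x=\emptyset$, so $m_x\le 5$, and $\sum_x m_x=\sum_x|I_x|+\sum_x|J_x|=10$. A direct count of the type-A, type-B, and type-C covers (with multiplicity) gives
\[
A+B+C=\sum_{x\in\bigcup_kH_k}\bigl(|I_x|+|J_x|\bigr)+\sum_x|I_x|\,|J_x|.
\]
All $20$ pairs must be covered at least once, so $A+B+C\ge 20$. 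The decisive local constraint is that if $x\in H_k$ then $I_x\cup J_x\subseteq\{1,\ldots,5\}\setminus\{k\}$ (otherwise $a_i\in H_i$ or $b_j\in H_j$); hence any $x$ contributing to types A or B must satisfy $m_x\le 4$.

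Maximising the contribution of a single $x$ over $(|I_x|,|J_x|)$ for fixed $m_x$ gives per-$m_x$ maxima $f(0),\ldots,f(5)=0,1,3,5,8,6$ (e.g.\ $m_x=4$ with $|I_x|=|J_x|=2$ gives $4+4=8$, whereas $m_x=5$ is stuck with $|I_x||J_x|\le 6$ and loses the type-A/B contribution). A brief check of the integer partitions of $10$ into parts of size at most $5$ shows that $\sum_x f(m_x)\le 8+8+3=19$, attained at the partition $(4,4,2)$. Thus $A+B+C\le 19<20$, contradicting the lower bound and completing the argument. The subtle point is precisely this trade-off: concentrating values to boost type C costs the $m_x$-bonus from types A/B, and balancing the two effects under the budget $\sum_x m_x=10$ forces the slack of $1$ that produces the contradiction.
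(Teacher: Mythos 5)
Your proof is correct, and it takes a genuinely different route from the paper's. Both proofs begin identically, reducing the cross-intersecting conditions on $20$ ordered pairs to the three alternatives $a_i\in H_j$, $b_j\in H_i$, $a_i=b_j$. But the paper then argues by case analysis and symmetry: it first observes that types A and B together cover at most $10$ pairs, that type C covers at most $5$ pairs in the absence of repeated values among the $a_i$ or the $b_j$ (so $15<20$ disposes of that case), and then, in the case where some $a_i=a_{i'}$, it fixes a block $D_0$ disjoint from all the $a_i$, forces $a_1=a_2=a_3=b_0$, and derives $a_1=b_1$, a contradiction with $F_1\neq G_1$. Your proof instead bundles everything into one global weighted count: you express $A+B+C$ exactly as a sum over repeated values $x$ of $\mathbf{1}[x\in\bigcup H_k]\cdot m_x+|I_x||J_x|$, observe the budget $\sum_x m_x=10$ and the local cap $m_x\le 4$ when $x$ lies in some $H_k$, bound the per-element contribution by $f(m_x)=m_x+\lfloor m_x/2\rfloor\lceil m_x/2\rceil$ for $m_x\le4$ and $f(5)=6$, and check that the best partition of $10$ gives only $f(4)+f(4)+f(2)=19<20$. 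The values and the maximization all check out (including that at $m=5$ the type-A/B bonus is lost, so $(5,5)$ yields only $12$). Your approach is arguably cleaner: it avoids the WLOG relabelings entirely, the ``slack of one'' is transparent, and it immediately explains why $\nu=4$ is achievable (with four blocks the budget is $8$, the local cap is $3$, and the partition $(3,3,2)$ already gives $13\ge 12$, matching the extremal example in the paper). The paper's case analysis is shorter to write but less systematic. One small remark: your bound $\sum_x f(m_x)\le 19$ does not by itself use the finer marginal constraints $\sum_x|I_x|=\sum_x|J_x|=5$, but since an upper bound suffices and $19<20$ already, this is harmless.
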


\begin{proof}
Suppose that $F_i\cap G_i=D_i$ are pairwise disjoint $(k-1)$-sets, $0\leq i\leq 4$. Define $x_i,y_i$ by $F_i=D_i\cup \{x_i\}, G_i=D_i\cup \{y_i\}$ and note that $x_i\neq y_i$. There are altogether $5\times 4$ conditions $F_i\cap G_j \neq \emptyset$ to satisfy. Each of them is assured by either of the following three relations: $x_i\in D_j,\ y_j\in D_i, x_i=y_j$. From the first two types there are at most one for each $x_i$ and $y_j$. Altogether at most $5+5=10$. If no multiple equalities (e.g. $x_1=y_2=y_3$) exist, we get only at most 5 more relations and $10+5<20$. Thus there must be places of coincidence, say by symmetry that of the form $x_i=x_{i'}$. Thus, again by symmetry, we may assume that $x_i\notin D_0$ for $0\leq i\leq 4$. Note that $y_0\in D_i$ holds for at most one value of $i$. Without loss of generality assume $y_0\notin D_i$, $1\leq i\leq 3$. By $F_i\cap G_0\neq \emptyset$, $y_0=x_i$, $i=1,2,3$. Look at $y_1$. By symmetry assume $y_1\notin D_2$. Now $G_1\cap F_2\neq \emptyset$ implies $y_1=x_2$. Hence $y_1=x_1$, a contradiction.
\end{proof}

Let $D_1,D_2,D_3,D_4$ be pairwise disjoint $(k-1)$-sets. Pick an element $d_i\in D_i$, $i=1,2,3$. Define $x_i,y_i$ by $x_1=x_2=y_4=d_3$, $x_3=y_1=d_2$ and  $x_4=y_2=y_3=d_1$. Setting $F_i=D_i\cup \{x_i\}$, $G_i=D_j\cup\{y_j\}$. One can check easily that $F_i\cap G_j\neq\emptyset$ for $1\leq i\neq j\leq 4$. This example shows that Proposition \ref{prop2-1} is best possible.

\begin{proof}[Proof of Theorem \ref{main-1}]
We distinguish two cases. First we suppose that
\begin{align}\label{ineq-2-1}
|\hf\cap \hg|> \binom{n-1}{k-1}-\binom{n-k-1}{k-1}+1.
\end{align}
Since $\hf,\hg$ are cross-intersecting, $\hf\cap \hg$ is intersecting. By \eqref{ineq-nontrival} and \eqref{ineq-2-1}, without loss of generality, we assume that $1\in F$ for all $F\in \hf\cap \hg$. We claim that $1\in H$ for all $H\in \hf\cup \hg$. Indeed, if $1\notin H\in \hf\cup \hg$ then $H\cap F\neq \emptyset$ for $F\in \hf\cap \hg$ yields
\[
|\hf\cap \hg|\leq \binom{n-1}{k-1}-\binom{n-k-1}{k-1}
\]
contradicting \eqref{ineq-2-1}. We proved that $1\in H$ for all $H\in \hf\cup \hg$ and thereby \eqref{ineq-1-1} holds.

Suppose next that \eqref{ineq-2-1} does not hold. By Proposition \ref{prop2-1} and  \eqref{ineq-EMCup}, we have for $n\geq 5k$,
\[
\left|\hi(\hf,\hg)\cap \binom{[n]}{k-1}\right| \leq  4\binom{n-1}{k-2}\overset{\eqref{ineq-4-0}}{\leq}\frac{4 (n-2)}{n-k}\binom{n-2}{k-2} \leq 5\binom{n-2}{k-2}.
\]
Since the remaining sets in $\hi(\hf,\hg)$ are  of size  at most $k-2$, we have
\[
\left|\hi(\hf,\hg)\right| \leq 5\binom{n-2}{k-2} +\sum_{0\leq i\leq k-2} \binom{n}{i}.
\]
Moreover,
\[
|\hf\cap \hg|\leq  \binom{n-1}{k-1}-\binom{n-k-1}{k-1}+1\leq k\binom{n-2}{k-2}.
\]
Thus, for $n\geq 2k+1$ we have
\begin{align*}
|\hf\wedge \hg|&\leq (k+5)\binom{n-2}{k-2}+\sum_{0\leq i\leq k-2} \binom{n}{i} \\
&\overset{\eqref{ineq-4-2}}{\leq} \frac{(k+5)(k-1)}{n-1}\binom{n-1}{k-1} +\frac{k}{n-k}\sum_{0\leq i\leq k-1} \binom{n}{i}\\
&\overset{\eqref{ineq-4-1}}{\leq} \frac{(k+5)(k-1)}{n-1}\binom{n-1}{k-1} +\frac{k}{n-k}\frac{n-1}{n-k}\sum_{0\leq i\leq k-1} \binom{n-1}{i}.
\end{align*}
Note that $n\geq 2k^2+8k$ implies
\[
\frac{(k+5)(k-1)}{n-1} \leq \frac{1}{2}
\]
and
\[
\frac{k}{n-k}\frac{n-1}{n-k}< \frac{k}{n-k}\left(1+\frac{k}{n-k}\right)<\frac{k}{2k^2+7k}\left(1+\frac{k}{2k^2}\right)=\frac{2k+1}{2k(2k+7)}\leq
\frac{1}{2}.
\]
Thus,
\[
|\hf\wedge \hg| \leq \frac{1}{2}\binom{n-1}{k-1} +\frac{1}{2}\sum_{0\leq i\leq k-1} \binom{n-1}{i} < \sum_{0\leq i\leq k-1}\binom{n-1}{i}.
\]
\end{proof}

\begin{lem}\label{lem2-2}
Suppose that $n\geq 2k^2+9k$, $\hf_1,\hg_1\subset \binom{[n]}{k}$ are cross-intersecting and  $\hf_2,\hg_2\subset \binom{[n]}{k}$ are cross-intersecting. Then
\begin{align}\label{ineq-2-3}
|(\hf_1\wedge \hg_1)\cup (\hf_2\wedge \hg_2)|\leq 2\sum_{0\leq i\leq k-1}\binom{n-2}{i}+\sum_{0\leq i\leq k-2}\binom{n-2}{i}.
\end{align}
with equality holding if and only if $\hf_1=\hg_1=\hs_{1}$ and $\hf_2=\hg_2=\hs_{2}$ up to isomorphism.
\end{lem}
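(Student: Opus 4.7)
The plan is to run the proof of Theorem \ref{main-1} in parallel on each pair $(\hf_j,\hg_j)$, $j=1,2$, splitting into cases according to whether the Hilton--Milner threshold
\[
|\hf_j\cap \hg_j| > \binom{n-1}{k-1}-\binom{n-k-1}{k-1}+1 \qquad (\star_j)
\]
holds for both indices or fails for at least one of them.

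In the case that both $(\star_1)$ and $(\star_2)$ hold, the first half of the proof of Theorem \ref{main-1} produces elements $a_1,a_2\in[n]$ such that $a_j\in H$ for every $H\in\hf_j\cup\hg_j$. If $a_1=a_2$, the union is contained in $\{S\subseteq[n]:a_1\in S,\ 1\leq|S|\leq k\}$ of size $\sum_{0\leq i\leq k-1}\binom{n-1}{i}$, which Pascal's identity shows is strictly less than the right-hand side of \eqref{ineq-2-3}. If $a_1\neq a_2$, say $a_1=1$ and $a_2=2$, then the union sits inside the family of nonempty subsets of $[n]$ of size at most $k$ meeting $\{1,2\}$; inclusion--exclusion yields $2\sum_{0\leq i\leq k-1}\binom{n-1}{i}-\sum_{0\leq i\leq k-2}\binom{n-2}{i}$, which after one round of Pascal matches the right-hand side of \eqref{ineq-2-3} exactly. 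The standard equality analysis (as in Theorem \ref{main-1}) then gives $\hf_j=\hg_j=\hs_{a_j}$ up to isomorphism.

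In the case that some $(\star_j)$ fails, WLOG $(\star_1)$ fails, the second half of the proof of Theorem \ref{main-1}, based on Proposition \ref{prop2-1} and \eqref{ineq-EMCup}, yields
\[
|\hf_1\wedge \hg_1|\leq (k+5)\binom{n-2}{k-2}+\sum_{0\leq i\leq k-2}\binom{n}{i}.
\]
Successive application of \eqref{ineq-4-0}, \eqref{ineq-4-1}, \eqref{ineq-4-2} using $n\geq 2k^2+9k$ collapses the right-hand side into at most $\sum_{0\leq i\leq k-1}\binom{n-2}{i}$. For the other pair, Theorem \ref{main-1} gives $|\hf_2\wedge \hg_2|\leq \sum_{0\leq i\leq k-1}\binom{n-1}{i}$, which Pascal rewrites as $\sum_{0\leq i\leq k-1}\binom{n-2}{i}+\sum_{0\leq i\leq k-2}\binom{n-2}{i}$. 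Summing the two bounds via $|A\cup B|\leq |A|+|B|$ matches the right-hand side of \eqref{ineq-2-3} with slack, so strict inequality holds in this case.

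The main obstacle is the calculation in the second case: both summands $(k+5)\binom{n-2}{k-2}$ and $\sum_{0\leq i\leq k-2}\binom{n}{i}$ must be absorbed into $\sum_{0\leq i\leq k-1}\binom{n-2}{i}$ so that the resulting bound combines cleanly with Theorem \ref{main-1}'s star-bound for $(\hf_2,\hg_2)$. Each application of \eqref{ineq-4-0}--\eqref{ineq-4-2} costs roughly a factor $k/(n-k)$, which is what nudges the threshold up from Theorem \ref{main-1}'s $n\geq 2k^2+8k$ to the present $n\geq 2k^2+9k$.
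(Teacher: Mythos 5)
Your proposal takes essentially the same route as the paper's own proof: split on whether the Hilton--Milner threshold $(\star_j)$ holds for both pairs; when both hold, deduce from Theorem~\ref{main-1}'s argument that each $\hf_j\cup\hg_j$ lies in a single star $\hs_{a_j}$ and compare the two subcases $a_1=a_2$ and $a_1\neq a_2$ (the inclusion--exclusion and Pascal computation you give for $a_1\neq a_2$ is exactly the right-hand side of \eqref{ineq-2-3}); when one threshold fails, combine the $\hi$-bound from Proposition~\ref{prop2-1} with Theorem~\ref{main-1} for the other pair and absorb everything using \eqref{ineq-4-0}--\eqref{ineq-4-2}. The paper does the bookkeeping in the failing case a little differently — it keeps the three terms $\frac{(k+5)(k-1)}{n-k}\binom{n-2}{k-1}$, $\frac{n-2}{n-2k}\sum\binom{n-2}{i}$, $\frac{n-2}{n-1-k}\sum\binom{n-2}{i}$ and bounds the three coefficients by $\frac12,\frac54,\frac54$ respectively, rather than first collapsing the failing pair's contribution into $\sum_{i\leq k-1}\binom{n-2}{i}$ as you propose — but this is only a rearrangement of the same estimate, and both land comfortably under $n\geq 2k^2+9k$.
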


\begin{proof}
By Theorem \ref{main-1}, for $j=1,2$
\[
|\hf_j\wedge \hg_j|\leq \sum_{0\leq i\leq k-1}\binom{n-1}{i}.
\]
By Proposition \ref{prop2-1} and \eqref{ineq-EMCup}, for $j=1,2$
\[
\left|\hi(\hf_j,\hg_j)\cap \binom{[n]}{k-1}\right| \leq 4\binom{n-1}{k-2}\overset{\eqref{ineq-4-0}}{\leq}\frac{4 (n-2)}{n-k}\binom{n-2}{k-2} \leq 5\binom{n-2}{k-2}.
\]
Since the remaining sets in $\hi(\hf_j,\hg_j)$ are  of size at most $k-2$, for $j=1,2$
\[
\left|\hi(\hf_j,\hg_j)\right| \leq 5\binom{n-2}{k-2} +\sum_{0\leq i\leq k-2} \binom{n}{i}.
\]
If $|\hf_j\cap \hg_j|\leq \binom{n-1}{k-1}-\binom{n-k-1}{k-1}\leq k\binom{n-2}{k-2}$ for some $j\in \{1,2\}$, then for $n\geq 2k+2$
\begin{align*}
&\quad |(\hf_1\wedge \hg_1)\cup (\hf_2\wedge \hg_2)| \\[5pt]
&\leq  |\hf_1\wedge \hg_1|+|\hf_2\wedge \hg_2|\\[5pt]
&\leq k\binom{n-2}{k-2}+5\binom{n-2}{k-2} +\sum_{0\leq i\leq k-2} \binom{n}{i}+ \sum_{0\leq i\leq k-1}\binom{n-1}{i}\\[5pt]
&\overset{\eqref{ineq-4-1}}{\leq } \frac{(k+5)(k-1)}{n-k}\binom{n-2}{k-1}+\frac{n-2}{n-2k}\sum_{0\leq i\leq k-2} \binom{n-2}{i}+\frac{n-2}{n-1-k}\sum_{0\leq i\leq k-1} \binom{n-2}{i}.
\end{align*}
Note that $n\geq 2k^2+9k\geq 10k$ implies
\[
\frac{(k+5)(k-1)}{n-k} \leq \frac{1}{2},\ \frac{n-2}{n-2k} \leq \frac{5}{4}\mbox{ and } \frac{n-2}{n-1-k}\leq \frac{5}{4}.
\]
Thus,
\begin{align*}
&\quad |(\hf_1\wedge \hg_1)\cup (\hf_2\wedge \hg_2)| \\[5pt]
&\leq \frac{1}{2}\binom{n-2}{k-1}+\frac{5}{4}\sum_{0\leq i\leq k-2} \binom{n-2}{i}+\frac{5}{4}\sum_{0\leq i\leq k-1} \binom{n-2}{i}\\[5pt]
&< 2\sum_{0\leq i\leq k-1}\binom{n-2}{i}+\sum_{0\leq i\leq k-2}\binom{n-2}{i}.
\end{align*}

Thus we may assume that $|\hf_j\cap \hg_j|\geq  \binom{n-1}{k-1}-\binom{n-k-1}{k-1}+1$ for each $j=1,2$. By \eqref{ineq-nontrival},   both $\hf_1\cap \hg_1$ and $\hf_2\cap \hg_2$ are trivial intersecting families. By the same argument as in Theorem \ref{main-1}, we see that there exist $x,y$ such that $\hf_1\cup \hg_1\subset \hs_{x}$ and $\hf_2\cup \hg_2\subset \hs_{y}$. If $x\neq y$, then we are done. If $x=y$, then
\begin{align*}
|(\hf_1\wedge \hg_1)\cup (\hf_2\wedge \hg_2)|&\leq |\hs_{x}\wedge \hs_{x}|\\[5pt]
 &=\sum_{0\leq i\leq k-1}\binom{n-1}{i}\\[5pt]
 &<2\sum_{0\leq i\leq k-1}\binom{n-2}{i}+\sum_{0\leq i\leq k-2}\binom{n-2}{i}.
\end{align*}
\end{proof}

\section{Distinct intersections in cross-intersecting families}

In this section, we determine the maximum number of distinct intersections in cross-intersecting families.

For the proof, we need the following  notion of  basis. Two cross-intersecting families $\hf, \hg$ are called {\it saturated} if any cross-intersecting families $\tilde{\hf},\tilde{\hg}$ with $\hf\subset \tilde{\hf}$, $\hg\subset \tilde{\hg}$ have $\hf= \tilde{\hf}$ and $\hg= \tilde{\hg}$. Since $\hf\subset \tilde{\hf}$ and  $\hg\subset \tilde{\hg}$ imply $\hi(\hf,\hg)\subset \hi(\tilde{\hf},\tilde{\hg})$, we may always assume that $\hf, \hg$ are saturated when maximizing the size of $\hi(\hf,\hg)$. Let $\hb(\hf)$ be the family of minimal (for containment) sets in $\hht(\hg)$ and let $\hb(\hg)$ be the family of minimal sets in $\hht(\hf)$. Let us prove some properties of the basis.

\begin{lem}\label{lem3-1}
Suppose that $\hf,\hg\subset \binom{[n]}{k}$ are saturated cross-intersecting families. Then (i) and (ii) hold.
\begin{itemize}
  \item[(i)] Both $\hb(\hf)$ and $\hb(\hg)$ are antichains, and $\hb(\hf), \hb(\hg)$ are cross-intersecting,
  \item[(ii)] $\hf=\left\{F\in \binom{[n]}{k}\colon \exists B\in \hb(\hf), B\subset F\right\}$ and $\hg=\left\{G\in \binom{[n]}{k}\colon \exists B\in \hb(\hg), B\subset G\right\}$.
\end{itemize}
\end{lem}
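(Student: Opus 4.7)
For part (i), the antichain property is immediate from the definition: $\hb(\hf)$ consists of the containment-minimal elements of $\hht(\hg)$, so no two of them can be nested, and similarly for $\hb(\hg)$. The interesting part is the cross-intersecting statement. The plan is to argue by contradiction using saturation. Suppose $B_1\in\hb(\hf)$ and $B_2\in\hb(\hg)$ with $B_1\cap B_2=\emptyset$. Since $|B_1|\le k$ and $|B_2|\le k$, we have $n\ge 2k\ge k+|B_2|$, so there is enough room to extend $B_1$ to a $k$-set $F\subset[n]\setminus B_2$. Because $B_1\in\hht(\hg)$, every $G\in\hg$ satisfies $F\cap G\supset B_1\cap G\ne\emptyset$, so $\{F\}$ is cross-intersecting with $\hg$. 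By the saturation of $\hf$, we must have $F\in\hf$. But then $F\cap B_2=\emptyset$ contradicts $B_2\in\hht(\hf)$.

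For part (ii), I would prove the two inclusions directly. For $\hf\subset\{F\in\binom{[n]}{k}\colon\exists B\in\hb(\hf),\ B\subset F\}$, note that every $F\in\hf$ lies in $\hht(\hg)$ because $\hf,\hg$ are cross-intersecting, so $F$ contains at least one minimal element of $\hht(\hg)$, which by definition is a member of $\hb(\hf)$. Conversely, if $F\in\binom{[n]}{k}$ contains some $B\in\hb(\hf)$, then $F\cap G\supset B\cap G\ne\emptyset$ for every $G\in\hg$, so $\{F\}$ is cross-intersecting with $\hg$, and saturation of $\hf$ forces $F\in\hf$. The argument for $\hg$ is identical with the roles of $\hf$ and $\hg$ swapped.

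The only step that requires genuine care is the extension argument in (i): one must verify that the parameter bound $n\ge 2k$ actually allows one to place a $k$-set containing $B_1$ inside $[n]\setminus B_2$. This is routine since $|B_1|\le k$ and $|B_2|\le k$ give $|B_1|+(k-|B_1|)=k\le n-|B_2|$, and then the whole proof is driven by the single principle that a $k$-subset of $[n]$ meeting every member of $\hg$ is automatically in $\hf$ under saturation. Everything else reduces to unpacking the definitions of $\hht$, $\hb(\cdot)$, and saturation, so no external results beyond those already recalled in the introduction are needed.
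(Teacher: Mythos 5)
Your proof is correct and follows essentially the same approach as the paper: the antichain property is definitional, the cross-intersecting property of the bases is obtained by extending a putative counterexample $B_1$ to a $k$-set avoiding $B_2$ and invoking saturation, and part (ii) is unpacking the definitions of $\hht$, $\hb(\cdot)$, and saturation. Your treatment of (i) is slightly cleaner than the paper's (you avoid splitting into the cases $|B|=|B'|=k$ versus $|B|<k$ by directly choosing a $k$-set $F\supset B_1$ inside $[n]\setminus B_2$, which works uniformly), and you spell out the details of (ii) that the paper calls ``immediate,'' but the underlying argument is the same.
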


\begin{proof}
 (i) Clearly,  $\hb(\hf)$ and $\hb(\hg)$ are both anti-chains. Suppose for contradiction that $B\in \hb(\hf), B'\in \hb(\hg)$ but $B\cap B'=\emptyset$. If $|B|=|B'|=k$, then $B\in \hf,B'\in \hg$ follows from saturatedness, a contradiction. If $|B|<k$, then there exists $F\supset B$ such that $|F|=k$ and $|F\cap B'|=|B\cap B'|=0$. By definition $F\in \hht(\hg)$. Since $\hf,\hg$ are saturated, we see that $F\in \hf$. But this contradicts the assumption that $B'$ is a transversal of $\hf$. Since $\hf,\hg$ are saturated, (ii) is immediate from the definition of $\hb(\hf)$ and  $\hb(\hg)$.
\end{proof}

Let $r(\hb)=\max\{|B|\colon B\in \hb\}$ and $s(\hb)=\min\{|B|\colon B\in \hb\}$. For any $\ell$ with $s(\hb)\leq \ell \leq r(\hb)$, define
\[
\hb^{(\ell)} = \left\{B\in \hb\colon |B|=\ell\right\} \mbox{ and } \hb^{(\leq \ell)} = \bigcup_{i=s(\hb)}^\ell\hb^{(i)}.
\]
It is easy to see that $s(\hb(\hg))=\tau(\hf)$.

By  a branching process, we establish an upper bound on the size of the basis.

\begin{lem}\label{lem3-2}
Suppose that $\hf,\hg\subset \binom{[n]}{k}$ are saturated cross-intersecting families. Let $\hb_1=\hb(\hf)$ and $\hb_2=\hb(\hg)$. For each $i=1,2$, if $s(\hb_i)\geq 2$ and $\tau(\hb_i^{(\leq r_i)})\geq 2$  then
\begin{align}\label{ineq-crosshb}
\sum_{r_i\leq \ell\leq k}\ell^{-2} k^{-\ell+2}|\hb_{3-i}^{(\ell)}|\leq 1.
\end{align}
\end{lem}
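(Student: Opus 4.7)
The plan is to establish \eqref{ineq-crosshb} via a branching process on the family $\hc := \hb_i^{(\leq r_i)}$, converted into a Kraft-type inequality. By Lemma~\ref{lem3-1}(i), $\hb_1$ and $\hb_2$ are cross-intersecting, so every $B\in \hb_{3-i}$ is a transversal of $\hc$. Moreover, every $C\in\hc$ has size at most $r_i\leq k$ and (by $s(\hb_i)\geq 2$) at least $2$, while the hypothesis $\tau(\hc)\geq 2$ guarantees that no single element of $[n]$ is contained in all members of $\hc$, so the branching cannot collapse after one step.

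I would fix a canonical base set $C_0\in \hc$ and, for each $B\in \hb_{3-i}^{(\ell)}$, extract a branching sequence inside $B$ as follows. First pick $x_1\in B\cap C_0$ (at most $|C_0|\leq k$ options). Since $\tau(\hc)\geq 2$, there is a canonical $C_1(x_1)\in \hc$ with $x_1\notin C_1(x_1)$; take $x_2\in B\cap C_1(x_1)$ (again at most $k$ options, and necessarily $x_2\neq x_1$). Iterating, at step $j$ pick a canonical $C_{j-1}\in \hc$ disjoint from $\{x_1,\ldots,x_{j-1}\}$ whenever one exists, and branch on its at most $k$ elements. This produces a $k$-ary decision tree whose root-to-node paths use distinct elements; every $B\in \hb_{3-i}^{(\ell)}$ contains at least one such path of length at most $\ell$ drawn entirely from $B$.

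To extract the weighted inequality, I would attach to each $B\in \hb_{3-i}^{(\ell)}$ an averaged family of ``first two branching'' choices: by permuting which pair of $B$'s elements plays the role of $(x_1,x_2)$, the set $B$ supplies roughly $\ell(\ell-1)\sim \ell^2$ distinct initial pairs, each of which extends to a length-$\ell$ sequence of depth $\ell-2$ in the $k$-ary tree. The Kraft weight $k^{-(\ell-2)}$ on each length-$\ell$ root-to-leaf path, summed over all paths coming from all $B\in \hb_{3-i}$, is at most $1$. Dividing by the $\ell^2$ multiplicity per $B$ and collecting terms according to $\ell$ then yields precisely \eqref{ineq-crosshb}.

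The principal obstacle is making this double-counting rigorous. One must verify (i) that swapping the roles of two elements of $B$ still produces a valid branching sequence inside the tree (this will use $\tau(\hc)\geq 2$ symmetrically and is the reason the restriction $\ell\geq r_i$ appears, so that $B$ has enough elements to supply the initial pair no matter which $C_0,C_1$ arise), and (ii) that paths of different lengths $\ell$ terminate in genuinely disjoint portions of the tree so that the Kraft sum $\leq 1$ across all $\ell$ is not inflated. I expect (ii) to follow from $\hb_{3-i}$ being an antichain (Lemma~\ref{lem3-1}(i)), which prevents longer members from being extensions of shorter ones under the canonical branching.
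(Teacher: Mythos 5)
Your overall strategy---a branching process on transversal sets, converted into a Kraft-type weight inequality---is indeed the approach the paper takes, but the mechanism you propose for producing the $\ell^{-2}$ factor does not work, and two auxiliary steps are left in a state that cannot be repaired as described.

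The central problem is your ``$\ell^2$ multiplicity'' argument. You want to charge each $B\in\hb_{3-i}^{(\ell)}$ with $\sim\ell^2$ root-to-leaf paths, one for each ordered pair $(x_1,x_2)$ from $B$, and then divide the Kraft sum by $\ell^2$. But in your branching process $C_0$ is a fixed canonical set, and $C_1(x_1)$ is then determined by $x_1$; the only admissible choices for $x_1$ are the elements of $B\cap C_0$, and the only admissible choices for $x_2$ are the elements of $B\cap C_1(x_1)$. The number of admissible initial pairs is therefore at most $|C_0|\cdot\max_{x_1}|C_1(x_1)|\leq r_i^2$, and it could in principle be as small as $1$ (for instance if $B\cap C_0$ is a singleton). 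It is emphatically not of order $\ell(\ell-1)$; swapping two arbitrary elements of $B$ into the first two slots is not a legal move in the tree. Moreover your assertion that ``the Kraft weight $k^{-(\ell-2)}$ on each length-$\ell$ root-to-leaf path... is at most $1$'' is simply false for a $k$-ary tree---a complete $k$-ary tree of depth $\ell$ would give total $k^2$. The correct observation, which the paper uses, is structural rather than combinatorial: the first branching set has size $s(\hb_i)\leq r_i\leq\ell$ and the second lies in $\hb_i^{(\leq r_i)}$ so also has size $\leq r_i\leq\ell$, while subsequent branching sets have size $\leq k$. Assigning weight $1/|B|$ per edge (so that the total weight is identically $1$ at every stage of the process) then gives each surviving sequence of length $\ell$ weight at least $\ell^{-2}k^{-(\ell-2)}$ directly, with no averaging needed and no overcounting to repair.

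Two further gaps: (i) you restrict the branching sets to $\hc=\hb_i^{(\leq r_i)}$ throughout. This can terminate the process prematurely, because a sequence $\widehat{S}$ might meet every member of $\hc$ yet fail to meet some larger member of $\hb_i$. The paper only insists on a set from $\hb_i^{(\leq r_i)}$ at the \emph{second} step (this is where $\tau(\hb_i^{(\leq r_i)})\geq 2$ is used and where the second $\ell$ factor comes from); afterwards it branches on arbitrary members of $\hb_i$, and the process runs until $\widehat{S}$ is a full transversal of $\hb_i$. (ii) You assert without proof that each $B\in\hb_{3-i}^{(\ell)}$ contains a root-to-leaf path drawn entirely from $B$; this requires the argument that whenever $\widehat{S}\subsetneq B$, there is some $B'\in\hb_i$ disjoint from $\widehat{S}$---which follows because $\widehat{S}\subsetneq B\in\hb(\hg)$ forces $\widehat{S}\notin\hht(\hf)$, and one then appeals to Lemma~\ref{lem3-1}(ii) and the cross-intersecting property to produce such a $B'$ and to continue extending the path inside $B$. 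Without this the double counting has nothing to count.
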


\begin{proof}
By symmetry, it is sufficient to prove the lemma only for $i=1$. For the proof we use a branching process. During the proof {\it a sequence} $S=(x_1,x_2,\ldots,x_\ell)$ is an ordered sequence of distinct elements of $[n]$ and we use $\widehat{S}$ to denote the underlying unordered set $\{x_1,x_2,\ldots,x_\ell\}$. At the beginning, we assign weight 1 to the empty sequence $S_{\emptyset}$. At the first stage, we choose $B_{1,1}\in \hb_1$ with $|B_{1,1}|=s(\hb_1)$. For any vertex $x_1\in B_{1,1}$, define one sequence $(x_1)$ and assign the weight $s(\hb_1)^{-1}$ to it.

At the second stage, since $\tau(\hb_1^{(\leq r_1)})\geq 2$, for each sequence $S=(x_1)$ we may choose $B_{1,2}\in \hb_1^{(\leq r_1)}$ such that $x_1\notin B_{1,2}$. Then we replace $S=(x_1)$ by $|B_{1,2}|$ sequences of the form $(x_1,y)$ with $y\in B_{1,2}$ and weight $\frac{w(S)}{|B_{1,2}|}$.

In each subsequent stage, we pick a sequence $S=(x_1,\ldots,x_p)$ and denote its weight by $w(S)$. If $\widehat{S}\cap B_1\neq \emptyset$ holds for all $B_1\in \hb_1$ then we do nothing. Otherwise we pick $B_1\in \hb_1$ satisfying $\widehat{S}\cap B_1= \emptyset$ and replace $S$ by the $|B_1|$ sequences $(x_1,\ldots,x_p,y)$ with $y\in B_1$ and assign weight $\frac{w(S)}{|B_1|}$ to each of them. Clearly, the total weight is always 1.

We continue until $\widehat{S}\cap B_1\neq \emptyset$ for all sequences and all $B_1\in \hb_1$. Since $[n]$ is finite, each sequence has length at most $n$ and  eventually the process stops. Let $\hs$ be the collection of sequences that survived in the end of the branching process and let $\hs^{(\ell)}$ be the collection of sequences in $\hs$ with length $\ell$.

\begin{claim}
To each $B_2\in \hb_{2}^{(\ell)}$ with $\ell\geq r_1$ there is some sequence $S\in \hs^{(\ell)}$ with $\widehat{S}=B_2$.
\end{claim}
\begin{proof}
Let us suppose the contrary and let $S=(x_1,\ldots,x_p)$ be a sequence of maximal length that occurred
at some stage of the branching process satisfying $\widehat{S}\subsetneqq B_2$. Since $\hb_1,\hb_2$ are cross-intersecting,
 $B_{1,1}\cap B_2\neq  \emptyset$, implying that $p\geq 1$. Since $\widehat{S}$ is a proper subset of $B_2$ and $B_2\in \hb_2=\hb(\hg)$, it follows that $\widehat{S}\notin \hb(\hg)\subset\hht(\hf)$. Thereby there exists $F\in \hf$ with $\widehat{S} \cap F= \emptyset$.  In view of Lemma \ref{lem3-1} (ii), we can find  $B_1'\in \hb_1$ such that $\widehat{S} \cap B_1'=  \emptyset$. Thus at some point we picked $S$ and some $\tilde{B}_1\in \hb_1$ with $\widehat{S} \cap \tilde{B}_1=  \emptyset$. Since $\hb_1,\hb_2$ are cross-intersecting, $B_2\cap \tilde{B}_1\neq  \emptyset$. Consequently, for each $y\in B_2\cap \tilde{B}_1$ the sequence $(x_1,\ldots,x_p,y)$ occurred in the branching process. This contradicts the maximality of $p$. Hence there is an $S$ at some stage satisfying $\widehat{S}= B_2$. Since $\hb_1,\hb_2$ are cross-intersecting, $\widehat{S}\cap B_1'=B_2\cap B_1'\neq \emptyset$ for all $B_1'\in \hb_1$. Thus $\widehat{S}\in \hs$ and the claim holds.
\end{proof}
By Claim 1, we see that $|\hb_2^{(\ell)}|\leq |\hs^{(\ell)}|$ for all $\ell\geq r_1$.  Let $S=(x_1,\ldots,x_\ell)\in \hs^{(\ell)}$ and let $S_i=(x_1,\ldots,x_i)$ for $i=1,\ldots,\ell$. At the first stage, $w(S_1)=1/s(\hb_1)$.   Assume that $B_{1,i}$ is the selected set when replacing $S_{i-1}$ in the  branching process for $i=2,\ldots,\ell$. Clearly,  $x_i\in B_i$, $B_{1,2}\in \hb_1^{(\leq r_1)}$ and
\[
w(S)= \frac{1}{s(\hb_1)}\prod_{i=2}^\ell \frac{1}{|B_{1,i}|}.
\]
Note that $s(\hb_1)\leq \ell$, $|B_{1,2}|= r_1\leq \ell$ and $|B_{1,i}|\leq k$ for $i\geq 3$. It follows that
\[
w(S)\geq \left(\ell^2 k^{\ell-2}\right)^{-1}=\ell^{-2} k^{-\ell+2}.
\]
Thus,
\[
\sum_{r_1\leq \ell\leq k}\ell^{-2} k^{-\ell+2}|\hb_{2}^{(\ell)}|\leq \sum_{r_1\leq \ell\leq k}\sum_{S\in \hs^{(\ell)}}w(S)\leq \sum_{S\in \hs}w(S)=1.
\]
\end{proof}

For the proof of Theorem \ref{main-2}, we also need the following lemma.
\begin{lem}\label{lem3-3}
Suppose that  $\hf\subset \binom{[n]}{k}, \hg\subset \binom{[n]}{k-1}$ are cross-intersecting. Then
\[
|\hi(\hf,\hg)|\leq  2\binom{n-1}{k-2}+(2k+1)\binom{n-1}{k-3}+\sum_{0\leq i\leq k-3}\binom{n}{i}.
\]
\end{lem}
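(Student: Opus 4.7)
I will stratify $\hi(\hf,\hg)$ by the size of the intersection and bound each layer via a matching number bound together with~\eqref{ineq-EMCup}.  Since $|F|=k$ and $|G|=k-1$, every intersection has size $s\in\{1,\ldots,k-1\}$; writing $\hi_s:=\hi(\hf,\hg)\cap\binom{[n]}{s}$, the lemma follows from the three estimates $|\hi_{k-1}|\leq 2\binom{n-1}{k-2}$, $|\hi_{k-2}|\leq(2k+1)\binom{n-1}{k-3}$, and the trivial bound $\sum_{1\leq s\leq k-3}|\hi_s|\leq\sum_{0\leq i\leq k-3}\binom{n}{i}$.

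The top stratum is quick.  Every $D\in\hi_{k-1}$ is itself a member of $\hg$ (since $|D|=k-1$) and is contained in a unique $F=D\cup\{y_D\}\in\hf$.  If $D_1,D_2,D_3\in\hi_{k-1}$ were pairwise disjoint with corresponding witnesses $F_i=D_i\cup\{y_i\}$, the cross-intersection $F_i\cap D_j\neq\emptyset$ for $i\neq j$ would force $y_i\in D_j$ for every $j\neq i$, contradicting the pairwise disjointness of the $D_j$.  Thus $\nu(\hi_{k-1})\leq 2$, and~\eqref{ineq-EMCup} yields $|\hi_{k-1}|\leq 2\binom{n-1}{k-2}$.

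The bulk of the work is establishing $\nu(\hi_{k-2})\leq 2k+1$.  Pick pairwise disjoint $D_1,\ldots,D_m\in\hi_{k-2}$ together with witnesses $F_i=D_i\cup\{a_i,b_i\}\in\hf$ and $G_i=D_i\cup\{c_i\}\in\hg$, with $c_i\notin\{a_i,b_i\}$.  For each ordered pair $(i,j)$ with $i\neq j$, the disjointness of $D_i$ and $D_j$ reduces $F_i\cap G_j\neq\emptyset$ to either (A) $c_j\in F_i$, or (B) $\{a_i,b_i\}\cap D_j\neq\emptyset$; the latter contributes at most $2m$ ordered pairs (each $a_i,b_i$ lies in at most one $D_j$), so (A) holds for at least $m(m-1)-2m$ ordered pairs.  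The crucial structural observation is a multiplicity bound on $r(v):=|\{j:c_j=v\}|$: if $c_{j_1}=\cdots=c_{j_p}=v$ for distinct $j_1,\ldots,j_p$, then $v\notin F_{j_s}$ for any $s$, so each $F_{j_s}\cap G_{j_t}$ with $s\neq t$ must come from $\{a_{j_s},b_{j_s}\}\cap D_{j_t}$; the $2$-element set $\{a_{j_s},b_{j_s}\}$ is therefore a transversal of the $p-1$ pairwise disjoint $(k-2)$-sets $\{D_{j_t}\}_{t\neq s}$, forcing $p\leq 3$.  Combining this with $n(v):=|\{i:v\in F_i\}|\leq 1+\alpha(v)$ (where $\alpha(v):=|\{i:v\in\{a_i,b_i\}\}|$ satisfies $\sum_v\alpha(v)=2m$) and with the disjointness identity $r(v)+\alpha(v)\leq m$ (since $c_i\notin\{a_i,b_i\}$ makes the two index sets disjoint subsets of $[m]$), a careful double count of $\sum_v r(v)n(v)$ against the lower bound $m(m-1)-2m$ contradicts $m\geq 2k+2$.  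Applying~\eqref{ineq-EMCup} to the $(k-2)$-graph $\hi_{k-2}$ then gives the middle estimate, and summing the three stratum bounds completes the proof.  The main obstacle is exactly this last step: without the multiplicity bound $r(v)\leq 3$ and the disjointness identity $r(v)+\alpha(v)\leq m$, case (A) admits no nontrivial control and the counting scheme collapses; squeezing just enough structure out of the cross-intersection within each fiber $\{j:c_j=v\}$ is the heart of the matter.
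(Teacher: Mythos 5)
Your bound on the top stratum $\hi_{k-1}$ is correct and is exactly the paper's argument: each $(k-1)$-element intersection is itself a member of $\hg$, and a third pairwise-disjoint one would leave some $F_i$ disjoint from some $D_j$.

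Your treatment of the middle stratum, however, takes a genuinely different route from the paper and contains a quantitative slip. The paper does not bound $\nu(\hi_{k-2})$ at all; instead, having fixed two disjoint sets $G_1, G_2\in\hh_1$ with $G_i\subset F_i$, it shows that the subfamily of $(k-2)$-intersections \emph{avoiding} $F_1\cup F_2$ is intersecting (a delicate case analysis), and then bounds the $(k-2)$-intersections that meet $F_1\cup F_2$ trivially by $|F_1\cup F_2|\binom{n-1}{k-3}\leq 2k\binom{n-1}{k-3}$; this is where the factor $2k+1$ comes from, and it is tied to the size of $F_1\cup F_2$. Your argument instead bounds $\nu(\hi_{k-2})$ directly by double counting. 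The individual ingredients are all correct: $r(v)\leq 3$, $n(v)\leq 1+\alpha(v)$, $\sum_v\alpha(v)=2m$, and $r(v)+\alpha(v)\leq m$ all hold as you say. But feeding them into the count gives
\[
m(m-3)\leq\sum_v r(v)\,n(v)\leq\sum_v r(v)\bigl(1+\alpha(v)\bigr)\leq m+3\sum_v\alpha(v)=7m,
\]
that is $m\leq 10$, an absolute constant rather than $2k+1$. This is \emph{stronger} than the paper's $2k+1$ once $k\geq 5$, but \emph{weaker} when $k\in\{3,4\}$ (for $k=2$ the stratum is vacuous, since a cross-intersecting pair produces no empty intersection), so the inequality as stated in the lemma is not established for small $k$ by your counting, and in particular "contradicts $m\geq 2k+2$" does not follow from the quantities you list. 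The gap is cosmetic: either treat $k\leq 4$ separately, or replace $2k+1$ by $\max(2k+1,10)$, which is harmless for the later use of the lemma in Corollary~\ref{cor3-4} because that coefficient is eventually absorbed by a factor $k^2/n$. Still, as written your proposal proves a slightly different (and for small $k$, weaker) inequality than the one claimed.
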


\begin{proof}
Let $\hh_1=\hi(\hf,\hg)\cap \binom{[n]}{k-1}$. We claim that $\nu(\hh_1)\leq 2$. Otherwise, let $G_i=F_i\cap G_i$, $i=1,2,3$, be three pairwise disjoint members in $\hh_1$ with $F_i\in \hf$, $G_i\in \hg$. Define $x_i$ by  $F_i\setminus G_i = \{x_i\}$. By symmetry we may assume that $x_1\notin G_3$. Then $F_1,G_3$ are disjoint, contradicting the fact that $\hf,\hg$ are cross-intersecting. Thus $\nu(\hh_1)\leq 2$.

If $\nu(\hh_1)\leq 1$, then \eqref{ineq-ekr} implies $|\hh_1|\leq \binom{n-1}{k-2}$. Since the remaining sets in $\hi(\hf,\hg)$ are all of size at most $k-2$, it follows that
\[
|\hi(\hf,\hg)|\leq \binom{n-1}{k-2}+\sum_{0\leq i\leq k-2}\binom{n}{i}.
\]
If $\nu(\hh_1)= 2$, let $G_1=F_1\cap G_1, G_2=F_2\cap G_2$ be two disjoint members in $\hh_1$ and let $\hh_2=\hi(\hf,\hg)\cap \binom{[n]\setminus (F_1\cup F_2)}{k-2}$.  We claim that $\hh_2$ is intersecting. Suppose not, let $D_3=F_3\cap G_3, D_4=F_4\cap G_4$ be two disjoint members in $\hh_2$. Define $x_i$ by  $F_i\setminus G_i= \{x_i\}$ for $i=1,2$ and define $x_i,y_i,z_i$ by  $F_i\setminus D_i= \{x_i,y_i\}$, $G_i\setminus D_i= \{z_i\}$  for $i=3,4$. Since $F_3\cap G_1\neq \emptyset$ and $F_3\cap G_2\neq \emptyset$, by symmetry we may assume that $x_3\in G_1$ and $y_3\in G_2$. Similarly,  assume that $x_4\in G_1$ and $y_4\in G_2$. Since $F_1\cap G_3\neq \emptyset$ and $F_2\cap G_3\neq \emptyset$, we see that $z_3\in F_1\cap F_2$. It follows that $x_1=x_2=z_3$. Similarly we have $x_1=x_2=z_4$. But then $F_3,G_4$ are disjoint, contradicting the fact that $\hf,\hg$ are cross-intersecting. Thus $\hh_2$ is intersecting. By \eqref{ineq-ekr} we have
\[
\left|\hi(\hf,\hg)\cap \binom{[n]}{k-2}\right|\leq |F_1\cup F_2|\binom{n-1}{k-3}+\binom{n-2k}{k-3}\leq  (2k+1)\binom{n-1}{k-3}.
\]
By \eqref{ineq-EMCup} we obtain that
\[
\left|\hi(\hf,\hg)\cap \binom{[n]}{k-1}\right|\leq  2\binom{n-1}{k-2}.
\]
Hence
\[
|\hi(\hf,\hg)|\leq 2\binom{n-1}{k-2}+(2k+1)\binom{n-1}{k-3}+\sum_{0\leq i\leq k-3}\binom{n}{i}.
\]
\end{proof}
\begin{cor}\label{cor3-4}
Let $\hf, \hg\subset \binom{[n]}{k}$ be cross-intersecting families. If $\hg$ is a star,  then
\begin{align}\label{ineq-cor3-4}
\hi(\hf,\hg)\leq 2\sum_{0\leq i\leq k-2}\binom{n-1}{i}+ \binom{n-2}{k-2}+(2k+1)\binom{n-2}{k-3}.
\end{align}
\end{cor}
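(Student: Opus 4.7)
The plan is to exploit that $\hg$ is a star by partitioning $\hi(\hf,\hg)$ according to whether the intersection contains the common element of $\hg$. Up to relabeling I assume $1\in G$ for every $G\in\hg$. Intersections $F\cap G$ with $1\in F$ then contain $1$ and, since $F\neq G$ are distinct $k$-sets, have size at most $k-1$; the number of such sets is trivially bounded by $\sum_{0\leq i\leq k-2}\binom{n-1}{i}$ (subsets of $[n]\setminus\{1\}$ of size at most $k-2$, augmented with $1$).

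The remaining intersections are precisely those arising from pairs with $1\notin F$, i.e.\ $F\in\hf(\bar 1)$, in which case $F\cap G=F\cap(G\setminus\{1\})$. Set $\hg(1)=\{G\setminus\{1\}:G\in\hg\}\subset\binom{[n]\setminus\{1\}}{k-1}$. Because $\hf$ and $\hg$ are cross-intersecting and $1\notin F$, the families $\hf(\bar 1)\subset\binom{[n]\setminus\{1\}}{k}$ and $\hg(1)$ are cross-intersecting on the $(n-1)$-element ground set $[n]\setminus\{1\}$; moreover $F\neq G\setminus\{1\}$ is automatic since they have different cardinalities, so the distinct-intersection count transfers exactly. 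Applying Lemma \ref{lem3-3} with $n$ replaced by $n-1$ yields
\[
|\hi(\hf(\bar 1),\hg(1))|\leq 2\binom{n-2}{k-2}+(2k+1)\binom{n-2}{k-3}+\sum_{0\leq i\leq k-3}\binom{n-1}{i}.
\]

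To finish, I add the two estimates and simplify: rewriting $\sum_{0\leq i\leq k-3}\binom{n-1}{i}=\sum_{0\leq i\leq k-2}\binom{n-1}{i}-\binom{n-1}{k-2}$ and applying Pascal's identity $\binom{n-1}{k-2}=\binom{n-2}{k-2}+\binom{n-2}{k-3}$ collapses the sum into a quantity bounded above by the target $2\sum_{0\leq i\leq k-2}\binom{n-1}{i}+\binom{n-2}{k-2}+(2k+1)\binom{n-2}{k-3}$. The only substantive step is the invocation of Lemma \ref{lem3-3}; the main point that needs care is recognising $(\hf(\bar 1),\hg(1))$ as an admissible input to that lemma and verifying that every distinct intersection in $\hi(\hf,\hg)$ not containing $1$ appears as a distinct intersection in $\hi(\hf(\bar 1),\hg(1))$, for which the cardinality gap $k\neq k-1$ is crucial.
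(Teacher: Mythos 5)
Your proof is correct and follows essentially the same route as the paper's: decompose $\hi(\hf,\hg)$ by whether $1\in F$, bound the first part by $\sum_{0\leq i\leq k-2}\binom{n-1}{i}$ (sets containing $1$ of size at most $k-1$), and apply Lemma~\ref{lem3-3} to the cross-intersecting pair $(\hf(\bar 1),\hg(1))$ for the second part. The only (minor) difference is that the paper first passes to a saturated pair and invokes Lemma~\ref{lem3-1}(ii) to get $\hs_1\subset\hf$, then bounds the first part by $|\hi(\hs_1,\hs_1)|$, whereas you count that part directly without needing saturation; your closing arithmetic via Pascal's identity is sound.
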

\begin{proof}
Assume without loss of generality that $\hf$ and $\hg$ are saturated. Since $\hg$ is a star, we may assume that $\hg\subset \hs_{1}$. Then $\{1\}\in \hht(\hg)$ whence $\{1\}\in \hb(\hf)$. By Lemma \ref{lem3-1} (ii) $\hs_{1} \subset \hf$. Note that $\hf(\bar{1})\subset \binom{[2,n]}{k}, \hg(1)\subset \binom{[2,n]}{k-1}$ are cross-intersecting. By Lemma \ref{lem3-3}, we have
\begin{align*}
|\hi(\hf(\bar{1}),\hg(1))| &\leq 2\binom{n-2}{k-2}+(2k+1)\binom{n-2}{k-3}+\sum_{0\leq i\leq k-3}\binom{n-1}{i}\\[5pt]
&\leq  \sum_{0\leq i\leq k-2}\binom{n-1}{i}+\binom{n-2}{k-2}+(2k+1)\binom{n-2}{k-3}.
\end{align*}
Thus,
\begin{align*}
|\hi(\hf,\hg)|&\leq |\hi(\hs_1,\hs_1)|+ |\hi(\hf(\bar{1}),\hg(1))|\\[5pt]
&\leq 2\sum_{0\leq i\leq k-2}\binom{n-1}{i}+\binom{n-2}{k-2}+(2k+1)\binom{n-2}{k-3}.
\end{align*}
\end{proof}

Now we are in position to prove the main theorem.

\begin{proof}[Proof of Theorem \ref{main-2}]
Let $\hb_1=\hb(\hf),\hb_2=\hb(\hg)$ and  let $s_1=s(\hb_1)$, $s_2=s(\hb_2)$.  Suppose first that $\min\{s_1,s_2\}=1$. By symmetry let $s_2=1$, then $\hg$ is a star. By \eqref{ineq-cor3-4} and $n\geq 2k+3$, we have
\begin{align*}\label{ineq-3-7}
&\quad|\hi(\hf,\hg)|\\[5pt]
&\leq 2\sum_{0\leq i\leq k-2}\binom{n-1}{i}+\binom{n-2}{k-2}+(2k+1)\binom{n-2}{k-3}\\[5pt]
&\overset{\eqref{ineq-4-0}}{\leq} 2\sum_{0\leq i\leq k-2}\binom{n-1}{i}+\frac{n-2-2}{n-2-2(k-1)}\binom{n-4}{k-2}+\frac{(2k+1)(k-2)}{n-1}\binom{n-1}{k-2}\\[5pt]
&\overset{\eqref{ineq-4-0}}{\leq} 2\sum_{0\leq i\leq k-2}\binom{n-1}{i}+\frac{n-4}{n-2k}\binom{n-4}{k-2}+\frac{(2k+1)(k-2)}{n-1}\frac{n-1-3}{n-1-3(k-1)}\binom{n-4}{k-2}\\[5pt]
&\overset{\eqref{ineq-4-1}}{\leq} \frac{2 (n-4)}{n-1-3k}\sum_{0\leq i\leq k-2}\binom{n-4}{i}+\frac{n-4}{n-2k}\binom{n-4}{k-2}+\frac{(2k+1)(k-2)(n-4)}{(n-1)(n-3k)}\binom{n-2}{k-3}.
\end{align*}
Note that $n\geq 63 k$ implies
\[
\frac{2(n-4)}{n-1-3k}\leq \frac{21}{10},\ \frac{n-4}{n-2k}\leq \frac{11}{10}
\]
and $n\geq 44 k^2$ implies
\[
\frac{(2k+1)(k-2)}{n-1}\leq \frac{1}{22},\  \frac{n-4}{n-3k} \leq \frac{11}{10}.
\]
Thus,
\begin{align}\label{ineq-3-9}
|\hi(\hf,\hg)|&\leq \frac{21}{10}\sum_{0\leq i\leq k-2}\binom{n-4}{i}+\frac{11}{10}\binom{n-4}{k-2}+\frac{1}{20}\binom{n-4}{k-2}\nonumber\\[5pt]
&<  \frac{13}{4}\sum_{0\leq i\leq k-2}\binom{n-4}{i}<|\hi(\ha_1,\ha_2)|.
\end{align}

Thus, we may assume that $s_1, s_2\geq 2$.
 Let us partition $\hf$ into $\hf^{(s_1)}\cup \ldots\cup\hf^{(k)}$ where $F\in \hf^{(\ell)}$ if $\max\{|B|\colon B\in \hb_1, B\subset F\}=\ell$. Similarly,  partition $\hg$ into $\hg^{(s_2)}\cup \ldots\cup\hg^{(k)}$ where $G\in \hg^{(\ell)}$ if $\max\{|B|\colon B\in \hb_2, B\subset G\}=\ell$.

Fix an $F\in \hf^{(\ell)}$ with $B_1\subset F, B_1\in \hb_1^{(\ell)}$. For an arbitrary $G\in \hg$, we have
 \[
 F\cap G =(B_1\cap G)\cup ((F\setminus B_1)\cap G),
 \]
 where $B_1\cap G\neq \emptyset$ and $|(F\setminus B_1)\cap G| \leq |F\setminus B_1|=k-\ell$. It follows that for $s_1\leq \ell\leq k$
 \begin{align}\label{ineq-3-3}
 |\hi(\hf^{(\ell)},\hg)| \leq |\hb_1^{(\ell)}|(2^\ell-1)\sum_{0\leq i\leq k-\ell }\binom{n-1}{i}.
 \end{align}
 Similarly, for $s_2\leq \ell\leq k$
 \begin{align}\label{ineq-3-4}
 |\hi(\hf,\hg^{(\ell)})| \leq |\hb_2^{(\ell)}|(2^\ell-1)\sum_{0\leq i\leq k-\ell }\binom{n-1}{i}.
 \end{align}

 Let $\alpha$ be the smallest integer such that $\tau(\hb_1^{(\leq \alpha)})\geq 2$ and let $\beta$ be the smallest integer such that $\tau(\hb_2^{(\leq \beta)})\geq 2$. By symmetry, we may assume that $\alpha \geq \beta$. We distinguish three cases.

{\bf Case 1.} $\beta\geq 3$.
Let $\hf'=\hf^{(s_1)}\cup \cdots \cup \hf^{(\beta-1)}$.  Note that $\hf'$ and $\hg$ are cross-intersecting and $\hf'$ is a star. By \eqref{ineq-cor3-4} and \eqref{ineq-3-9}, we have
\begin{align}\label{ineq-3-5}
 |\hi(\hf',\hg)|\leq 2\sum_{0\leq i\leq k-2}\binom{n-1}{i}+\binom{n-2}{k-2}+(2k+1)\binom{n-2}{k-3}<\frac{13}{4}\sum_{0\leq i\leq k-2}\binom{n-4}{i}.
\end{align}
Define
\[
f(n,k,\ell) =  2^\ell \ell^2k^{\ell-2} \sum_{0\leq i\leq k-\ell}\binom{n-1}{i}.
\]
and let
\[
\lambda_{\ell} = \ell^{-2} k^{-\ell+2}|\hb_{1}^{(\ell)}|.
\]
By \eqref{ineq-3-3}, we see
\[
 \sum_{\beta\leq \ell\leq k} |\hi(\hf^{(\ell)},\hg)|= \sum_{\beta\leq \ell\leq k} \lambda_{\ell} f(n,k,\ell).
\]
Since
 \[
 \frac{f(n,k,\ell)}{f(n,k,\ell+1)}=\frac{\ell^2}{2k(\ell+1)^2}\cdot\frac{\sum\limits_{0\leq i\leq k-\ell}\binom{n-1}{i}}{\sum\limits_{0\leq i\leq k-\ell-1}\binom{n-1}{i}}\overset{\eqref{ineq-4-1}}{\geq} \frac{(n-1-k)\ell^2}{2k^2(\ell+1)^2}\geq 1 \mbox{ for } n\geq 5k^2,
 \]
 $f(n,k,\ell)$ is decreasing as a function of $\ell$.
 Moreover, by \eqref{ineq-crosshb} we have
 \[
 \sum_{\beta \leq \ell \leq k } \lambda_\ell \leq 1.
 \]
Hence,
 \begin{align}\label{ineq-3-6}
 \sum_{\beta\leq \ell\leq k} |\hi(\hf^{(\ell)},\hg)|\leq f(n,k,\beta) \leq  f(n,k,3) =72k\sum_{0\leq i\leq k-3}\binom{n-1}{i}.
 \end{align}
 Using \eqref{ineq-4-1} and \eqref{ineq-4-2}, for $n\geq 2k+3$ we have
\begin{align*}
\sum_{\beta\leq \ell\leq k} |\hi(\hf^{(\ell)},\hg)|&\leq 72k\sum_{0\leq i\leq k-3}\binom{n-1}{i}\\
  &\overset{\eqref{ineq-4-2}}{\leq}  \frac{72k^2}{n-1-k}\sum_{0\leq i\leq k-2}\binom{n-1}{i}\\
 &\overset{\eqref{ineq-4-1}}{\leq} \frac{72k^2}{n-1-k}\frac{n-1-3}{n-1-3k}\sum_{0\leq i\leq k-2}\binom{n-4}{i}.
 \end{align*}
 Since $n\geq 100k^2\geq 100k$, we infer
\[
\frac{72k^2}{n-1-k} \leq \frac{8}{11} \mbox{ and } \frac{n-1-3}{n-1-3k} \leq \frac{33}{32}.
\]
It follows that for $\beta\geq 3$
\begin{align}\label{ineq-3-10}
\sum_{\beta\leq \ell\leq k} |\hi(\hf^{(\ell)},\hg)| \leq \frac{3}{4}\sum_{0\leq i\leq k-2}\binom{n-4}{i}.
\end{align}
 Using \eqref{ineq-3-5} and \eqref{ineq-3-10}, we have
\begin{align*}
 |\hi(\hf,\hg)|\leq  |\hi(\hf',\hg)|+\sum_{\beta\leq \ell\leq k} |\hi(\hf^{(\ell)},\hg)|\leq 4\sum_{0\leq i\leq k-2}\binom{n-4}{i}<|\hi(\ha_1,\ha_2)|.
 \end{align*}

 {\bf Case 2.} $\beta=2$ and $\alpha>2$.

By  \eqref{ineq-3-10} we have
 \[
 \sum_{3\leq \ell\leq k} |\hi(\hf^{(\ell)},\hg)| \leq \frac{3}{4}\sum_{0\leq i\leq k-2}\binom{n-4}{i}.
 \]
 Since $\alpha>2$, it follows that $\hf^{(2)}$ is a star. By \eqref{ineq-cor3-4} and \eqref{ineq-3-9}, we have
\begin{align*}
 |\hi(\hf^{(2)},\hg)|<\frac{13}{4}\sum_{0\leq i\leq k-2}\binom{n-4}{i}.
\end{align*}
Thus,
\[
|\hi(\hf,\hg)|\leq  |\hi(\hf^{(2)},\hg)|+\sum_{3\leq \ell\leq k} |\hi(\hf^{(\ell)},\hg)|\leq 4\sum_{0\leq i\leq k-2}\binom{n-4}{i}<|\hi(\ha_1,\ha_2)|.
\]

 {\bf Case 3.} $\beta=\alpha=2$.

  Since $\hb_1^{(2)}, \hb_2^{(2)}$ are cross-intersecting, we see that $\nu(\hb_1^{(2)})\leq 2$ and $\nu(\hb_2^{(2)})\leq 2$. Moreover, $\beta=\alpha=2$ implies $\tau(\hb_1^{(2)})\geq 2$ and $\tau(\hb_2^{(2)})\geq  2$. It follows that $\hb_1^{(2)}, \hb_2^{(2)}$ are  either both triangles or both  subgraphs of $K_4$ with a matching of size two.

 {\bf Case 3.1.} $\hb_1^{(2)}, \hb_2^{(2)}$  are both  triangles.

Without loss of generality, assume that $\hb_1^{(2)}=\hb_2^{(2)}=\{(1,2),(1,3),(2,3)\}$. By saturatedness, we have
 \[
 \hf=\hg= \ha_3=\left\{A\in \binom{[n]}{k}\colon |A\cap \{1,2,3\}|\geq 2\right\}.
 \]
 Therefore,
 \begin{align*}\label{ineq-3-8}
 |\hi(\hf,\hg)|&= |\hi(\ha_3,\ha_3)|\\[5pt]
 &=3\sum_{0\leq i\leq k-2}\binom{n-3}{i}+3\sum_{0\leq i\leq k-3}\binom{n-3}{i}+\sum_{0\leq i\leq k-4}\binom{n-3}{i}\\[5pt]
 &\overset{\eqref{ineq-4-1}}{\leq} \frac{n-4}{n-3-k}\left(3\sum_{0\leq i\leq k-2}\binom{n-4}{i}+3\sum_{0\leq i\leq k-3}\binom{n-4}{i}+\sum_{0\leq i\leq k-4}\binom{n-4}{i}\right).
 \end{align*}
Since $n\geq 13k$ implies $\frac{n-4}{n-3-k}\leq \frac{13}{12}$, we obtain that
\begin{align*}
 |\hi(\hf,\hg)| < \frac{13}{4}\sum_{0\leq i\leq k-2}\binom{n-4}{i}+\frac{13}{4}\sum_{0\leq i\leq k-3}\binom{n-4}{i}+2\sum_{0\leq i\leq k-4}\binom{n-4}{i}< |\hi(\ha_1,\ha_2)|.
\end{align*}

{\bf Case 3.2.} $\hb_1^{(2)}, \hb_2^{(2)}$  are both subgraphs of $K_4$ with a matching of size two.

By symmetry, we may assume that $(1,3),(2,4) \in \hb_1^{(2)}$ and $(1,2),(3,4)\in \hb_2^{(2)}$. We further assume that $|\hb_1^{(2)}|\geq  |\hb_2^{(2)}|$.

{\bf Case 3.2.1.} $\hb_1^{(2)}= \{(1,3),(2,4),(1,4),(2,3)\}$ and $\hb_2^{(2)}= \{(1,2),(3,4)\}$.

 Since $\hf,\hg$ are saturated, we have $\hf=\hf^{(2)}$ and $\hg=\hg^{(2)}$. Thus,
 \begin{align*}
 |\hi(\hf,\hg)|&=4\sum_{0\leq i\leq k-2}\binom{n-4}{i}+6\sum_{0\leq i\leq k-3}\binom{n-4}{i} +4\sum_{0\leq i\leq k-4}\binom{n-4}{i}\\[5pt]
 &\qquad\qquad +\sum_{0\leq i\leq k-5}\binom{n-4}{i}\\[5pt]
 &<|\hi(\ha_1,\ha_2)|.
 \end{align*}

{\bf Case 3.2.2.} $\hb_1^{(2)}= \{(1,3),(2,4),(1,4)\}$ and $\hb_2^{(2)}= \{(1,2),(3,4),(1,4)\}$.

Since $\hf,\hg$ are saturated, we have $\hf=\hf^{(2)}$ and $\hg=\hg^{(2)}$. Thus,
\begin{align*}
 |\hi(\hf,\hg)|&=4\sum_{0\leq i\leq k-2}\binom{n-4}{i}+6\sum_{0\leq i\leq k-3}\binom{n-4}{i} +4\sum_{0\leq i\leq k-4}\binom{n-4}{i}\\[5pt]
 &\qquad\qquad +\sum_{0\leq i\leq k-5}\binom{n-4}{i}\\[5pt]
 &<|\hi(\ha_1,\ha_2)|.
\end{align*}

{\bf Case 3.2.3.} $\hb_1^{(2)}= \{(1,3),(2,4)\}$ and $\hb_2^{(2)}= \{(1,2),(3,4)\}$.

By Lemma \ref{lem3-1}, we have $\hs_{13}\cup \hs_{24}\subset \hf$ and $\hs_{12}\cup \hs_{34}\subset \hg$. Let $\hf' =\hf \setminus (\hs_{13}\cup \hs_{24})$ and $\hg' =\hg \setminus (\hs_{12}\cup \hs_{34})$. Since $\hb_1^{(2)}, \hg'$ are cross-intersecting,   $G\cap \{1,3\}\neq \emptyset$ and  $G\cap \{2,4\}\neq \emptyset$ for all $G\in \hg'$. Moreover, $G\notin \hs_{12}\cup \hs_{34}$. It follows that $G\cap [4]=\{1,4\}$ or $G\cap [4]=\{2,3\}$ for all $G\in \hg'$. Similarly, $F\cap [4]=\{1,4\}$ or $F\cap [4]=\{2,3\}$ for all $F\in \hf'$. Let
\[
\hf_{14}'= \{F\colon F\in \hf', F\cap [4]=\{1,4\}\},\  \hf_{23}'= \{F\colon F\in \hf', F\cap [4]=\{2,3\}\}
\]
and
\[
\hg_{14}'= \{G\colon G\in \hg', G\cap [4]=\{1,4\}\},\ \hg_{23}'= \{G\colon G\in \hg', G\cap [4]=\{2,3\}\}.
\]
Since $\hf_{14}',\hg_{23}'$ are cross-intersecting and $\hf_{23}',\hg_{14}'$ are cross-intersecting, by \eqref{ineq-2-3} we have
\begin{align*}
|(\hf_{14}'\wedge\hg_{23}')\cup (\hf_{23}'\wedge\hg_{14}')|\leq 2\sum_{0\leq i\leq k-3}\binom{n-6}{i}+\sum_{0\leq i\leq k-4}\binom{n-6}{i}.
\end{align*}
Note that $\hi(\hf_{14}',\hg\setminus\hg_{23}')\subset \hi(\hs_{13}\cup \hs_{24},\hs_{12}\cup \hs_{34})$ and $\hi(\hf_{23}',\hg\setminus\hg_{14}')\subset \hi(\hs_{13}\cup \hs_{24},\hs_{12}\cup \hs_{34})$. Thus,
\begin{align*}
|\hi(\hf,\hg)| &= |\hi(\hs_{13}\cup \hs_{24},\hs_{12}\cup \hs_{34})|+|(\hf_{14}'\wedge\hg_{23}')\cup (\hf_{23}'\wedge\hg_{14}')|\\[5pt]
&\leq 4\sum_{0\leq i\leq k-2}\binom{n-4}{i}+6\sum_{0\leq i\leq k-3}\binom{n-4}{i} +4\sum_{0\leq i\leq k-4}\binom{n-4}{i}\\[5pt]
&\qquad +\sum_{0\leq i\leq k-5}\binom{n-4}{i}+2\sum_{0\leq i\leq k-3}\binom{n-6}{i}+\sum_{0\leq i\leq k-4}\binom{n-6}{i}\\[5pt]
&=|\hi(\ha_1,\ha_2)|.
\end{align*}

{\bf Case 3.2.4.} $\hb_1^{(2)}= \{(1,3),(2,4),(1,4)\}$ and $\hb_2^{(2)}= \{(1,2),(3,4)\}$.

By Lemma \ref{lem3-1} (ii), we have $\hs_{13}\cup \hs_{24}\cup\hs_{14}\subset \hf$ and $\hs_{12}\cup \hs_{34}\subset \hg$. Let $\hf' =\hf \setminus (\hs_{13}\cup \hs_{24}\cup\hs_{14})$ and $\hg' =\hg \setminus (\hs_{12}\cup \hs_{34})$. Since $\hb_1^{(2)}, \hg'$ are cross-intersecting, $G\cap [4]=\{1,4\}$ for all $G\in \hg'$. Similarly, $F\cap[4]=\{2,3\}$ for all $F\in \hf'$. Since $\hf',\hg'$ are cross-intersecting, by \eqref{ineq-1-1} we have
\[
|\hf_{23}'\wedge\hg_{14}'| \leq  \sum_{0\leq i\leq k-3}\binom{n-5}{i}<2\sum_{0\leq i\leq k-3}\binom{n-6}{i}+\sum_{0\leq i\leq k-4}\binom{n-6}{i}.
\]
Note that $\hi(\hf_{23}',\hg\setminus\hg_{14}')\subset \hi(\hs_{13}\cup \hs_{24},\hs_{12}\cup \hs_{34})$. Thus,
\begin{align*}
|\hi(\hf,\hg)| &= |\hi(\hs_{13}\cup \hs_{24},\hs_{12}\cup \hs_{34})|+|\hf_{23}'\wedge\hg_{14}'|\leq |\hi(\ha_1,\ha_2)|.
\end{align*}
\end{proof}

\section{Distinct intersections in a $t$-intersecting family}

In this section, we determine the maximum number of distinct intersections in a $t$-intersecting family.

Since $\hf\subset \tilde{\hf}$ implies $\hi(\hf)\subset \hi(\tilde{\hf})$, we may always assume that $\hf$ is saturated. Let $\hb=\hb_t(\hf)$ be the family of minimal (for containment) sets in $\hht_t(\hf)$.

\begin{lem}\label{lem4-1}
Suppose that $\hf\subset \binom{[n]}{k}$ is a saturated $t$-intersecting family. Then (i) and (ii) hold.
\begin{itemize}
  \item[(i)] $\hb$ is a $t$-intersecting antichain,
  \item[(ii)] $\hf=\left\{H\in \binom{[n]}{k}\colon \exists B\in \hb, B\subset H\right\}$.
\end{itemize}
\end{lem}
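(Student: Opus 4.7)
The plan is to mirror the proof of Lemma \ref{lem3-1}, adapting it from the cross-intersecting setting to the $t$-intersecting one. Throughout I assume the standing hypothesis $n \geq 2k - t$ so that nontrivial $t$-intersecting families exist and there is always room to extend small sets.

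For part (i), the antichain property is immediate from the definition of $\hb$ as the family of minimal (with respect to containment) members of $\hht_t(\hf)$. For the $t$-intersecting property, I would argue by contradiction: suppose $B_1, B_2 \in \hb$ satisfy $|B_1 \cap B_2| \leq t-1$. The goal is to produce a $k$-set that is a $t$-transversal of $\hf$ but violates the $t$-transversal property of $B_2$. Since $|B_1|, |B_2| \leq k$ and $n \geq 2k$, there are at least $k - |B_1|$ elements in $[n] \setminus (B_1 \cup B_2)$; extend $B_1$ to a $k$-set $F_1$ by adding such elements. Then $F_1 \cap B_2 = B_1 \cap B_2$ by construction, while $|F_1 \cap F| \geq |B_1 \cap F| \geq t$ for every $F \in \hf$. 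Hence $F_1 \in \hht_t(\hf) \cap \binom{[n]}{k}$, so $\hf \cup \{F_1\}$ is still $t$-intersecting, and saturatedness forces $F_1 \in \hf$. But then $B_2$, being a $t$-transversal of $\hf$, must satisfy $|B_2 \cap F_1| \geq t$, contradicting $|F_1 \cap B_2| = |B_1 \cap B_2| \leq t-1$.

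For part (ii), let $\hf' = \{H \in \binom{[n]}{k} : \exists B \in \hb, B \subset H\}$. For the inclusion $\hf \subset \hf'$, fix $F \in \hf$. Since $\hf$ is $t$-intersecting, $|F \cap F'| \geq t$ for all $F' \in \hf$, so $F \in \hht_t(\hf)$. Any member of $\hht_t(\hf)$ contains a minimal one, so $F$ contains some $B \in \hb$. For the reverse inclusion $\hf' \subset \hf$, take $H \in \hf'$ and let $B \in \hb$ with $B \subset H$. For every $F \in \hf$, $|H \cap F| \geq |B \cap F| \geq t$, so $H \in \hht_t(\hf) \cap \binom{[n]}{k}$, whence $\hf \cup \{H\}$ remains $t$-intersecting and saturatedness gives $H \in \hf$.

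There is no real obstacle here; the only point that requires care is checking that the extension of $B_1$ to a $k$-set disjoint from $B_2 \setminus B_1$ is feasible, which just needs the mild size bound $n \geq 2k$. The rest is a direct transcription of the argument used for Lemma \ref{lem3-1}, with the role of the cross-intersecting condition replaced by the $t$-intersecting condition applied to a single family.
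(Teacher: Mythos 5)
Your proposal is correct and follows essentially the same line as the paper's proof: extend one of the two offending transversals to a $k$-set avoiding the other, use saturatedness to force that $k$-set into $\hf$, and derive a contradiction with the $t$-transversal property; part (ii) follows directly from saturatedness and minimality. The only cosmetic difference is that the paper splits off the case $|B|=|B'|=k$ before extending, whereas you handle both cases uniformly, but the substance is identical.
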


\begin{proof}
 (i) Clearly, $\hb$ is an anti-chain. Suppose for contradiction that $B,B'\in \hb$ but $|B\cap B'|<t$. If $|B|=|B'|=k$, then $B,B'\in \hf$ as $\hf$ is saturated, a contradiction. If $|B'|<k$, then there exists $F'\supset B'$ such that $|F'|=k$ and $|F'\cap B|=|B'\cap B|<t$. By definition $F'\in \hht_t(\hf)$. Since $\hf$ is saturated, we see that $F'\in \hf$. But this contradicts the assumption that $B$ is a $t$-transversal. Since $\hf$ is saturated, (ii) is immediate from the definition of $\hb$.
\end{proof}

Let $r(\hb)=\max\{|B|\colon B\in \hb\}$ and $s(\hb)=\min\{|B|\colon B\in \hb\}$. For any $\ell$ with $s(\hb)\leq \ell \leq r(\hb)$ define
\[
\hb^{(\ell)} = \left\{B\in \hb\colon |B|=\ell\right\} \mbox{ and } \hb^{(\leq \ell)} = \bigcup_{i=s(\hb)}^\ell\hb^{(i)}.
\]
It is easy to see that $s(\hb_t(\hf))=\tau_t(\hf)$.

\begin{lem}\label{lem4-2}
Suppose that $\hf\subset \binom{[n]}{k}$ is a saturated $t$-intersecting family and $\hb=\hb_t(\hf)$. If $s(\hb)\geq t+1$ and $\tau_t(\hb^{(\leq r)})\geq t+1$, then
\begin{align}\label{ineq-thb}
\sum_{r\leq \ell \leq k} \left(\binom{\ell}{t}\ell k^{\ell-t-1}\right)^{-1}|\hb^{(\ell)}|\leq 1.
\end{align}
\end{lem}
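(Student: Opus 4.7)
The plan is to imitate the branching-process argument of Lemma \ref{lem3-2}, with the single new twist that the first stage branches over $t$-element subsets of a minimum-size member of $\hb$ rather than over single elements. This produces the new factor $\binom{\ell}{t}$ in the weight bound, while the remaining factors $\ell$ and $k^{\ell-t-1}$ appear exactly as in the $t=1$ case.

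Concretely, I would begin with the empty trace of weight $1$ and proceed in stages. At stage $1$, fix $B_1\in\hb$ with $|B_1|=s(\hb)$ and replace the empty trace by the $\binom{s(\hb)}{t}$ traces given by the $t$-subsets of $B_1$, each of weight $1/\binom{s(\hb)}{t}$. At stage $2$, since $\tau_t(\hb^{(\leq r)})\geq t+1$ and each current trace $T$ has size $t$, pick $B_2\in\hb^{(\leq r)}$ with $|T\cap B_2|<t$ and replace $T$ by the traces $T\cup\{y\}$ for $y\in B_2\setminus T$, splitting its weight equally. At each subsequent stage, if a current trace $T$ is not yet a $t$-transversal of $\hb$, pick any $B\in\hb$ with $|T\cap B|<t$ and branch similarly over $B\setminus T$. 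The total weight remains $1$ throughout, and a surviving trace of size $\ell\geq r$ has weight at least $\bigl(\binom{s(\hb)}{t}\cdot r\cdot k^{\ell-t-1}\bigr)^{-1}\geq \bigl(\binom{\ell}{t}\,\ell\, k^{\ell-t-1}\bigr)^{-1}$, using $s(\hb)\leq r\leq\ell$.

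The key claim, and the step I expect to be the trickiest, is that each $B\in\hb^{(\ell)}$ with $\ell\geq r$ gives rise to a surviving trace equal to $B$; once this is granted, summing the above weight bound yields \eqref{ineq-thb}. To prove it I take a trace $T^*\subset B$ of maximum size that ever appeared in the process. Since $\hb$ is $t$-intersecting, $|B_1\cap B|\geq t$, so stage $1$ already produces some $T^*\subset B$ of size $t$. If $T^*\subsetneq B$, then $T^*$ cannot be a surviving trace: indeed, a surviving trace is a $t$-transversal of $\hb$, which by Lemma \ref{lem4-1}(ii) would force $T^*$ to be a $t$-transversal of $\hf$, contradicting the minimality of $B$ in $\hht_t(\hf)$. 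Hence $T^*$ must have been branched at some stage using some $\tilde B\in\hb$ with $|T^*\cap\tilde B|<t$, and then $|B\cap\tilde B|\geq t$ forces $|(B\cap\tilde B)\setminus T^*|\geq 1$, producing an extension $T^*\cup\{y\}\subset B$ and contradicting the maximality of $|T^*|$. The main obstacle is precisely this ``proper subset of a minimal $t$-transversal is not itself a $t$-transversal'' chain, which is what makes Lemma \ref{lem4-1} indispensable in the $t$-intersecting setting.
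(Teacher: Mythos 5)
Your proposal follows the same branching-process strategy as the paper's own proof, including the same initial branching over $t$-subsets of a minimum member of $\hb$, the same use of Lemma \ref{lem4-1}(ii) inside the key claim, and the same weight estimate $\binom{s(\hb)}{t}\cdot r\cdot k^{\ell-t-1}\leq\binom{\ell}{t}\,\ell\,k^{\ell-t-1}$ (the paper tracks ordered sequences rather than sets, but this is purely cosmetic). The argument is correct and matches the paper's essentially step for step.
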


\begin{proof}
For the proof we use a branching process. During the proof {\it a sequence} $S=(x_1,x_2,\ldots,x_\ell)$ is an ordered sequence of distinct elements of $[n]$ and we use $\widehat{S}$ to denote the underlying unordered set $\{x_1,x_2,\ldots,x_\ell\}$. At the beginning, we assign weight 1 to the empty sequence $S_{\emptyset}$. At the first stage, we choose $B_1\in \hb_t$ with $|B_1|=s(\hb)\geq t+1$. For any $t$-subset $\{x_1,\ldots,x_t\}\subset B_1$, define one sequence $(x_1,\ldots,x_t)$ and assign the weight $\binom{s(\hb)}{t}^{-1}$ to it.

At the second stage, since $\tau_t(\hb^{(\leq r)})\geq t+1$, for each $t$-sequence $S=(x_1,\ldots,x_t)$ we may choose $B\in \hb^{(\leq r)}$ such that $|\widehat{S}\cap B|<t$. Then we replace $S=(x_1,\ldots,x_t)$ by $|B\setminus \widehat{S}|$ $(t+1)$-sequences of the form $(x_1,\ldots,x_t,y)$ with $y\in B\setminus \widehat{S}$ and weight $\frac{w(S)}{|B\setminus \widehat{S}|}$.

In each subsequent stage, we pick a sequence $S=(x_1,\ldots,x_p)$ and denote its weight by $w(S)$. If $|\widehat{S}\cap B|\geq t$ for all $B\in \hb$ then we do nothing. Otherwise we pick $B\in \hb$ satisfying $|\widehat{S}\cap B|< t$ and replace $S$ by the $|B\setminus \widehat{S}|$ sequences $(x_1,\ldots,x_p,y)$ with $y\in B\setminus \widehat{S}$ and assign weight $\frac{w(S)}{|B\setminus \widehat{S}|}$ to each of them. clearly, the total weight is always 1.

We continue until $|\widehat{S}\cap B|\geq t$ for all sequences and all $B\in \hb$. Since $[n]$ is finite, each sequence has length at most $n$ and  eventually the process stops. Let $\hs$ be the collection of sequences that survived in the end of the branching process and let $\hs^{(\ell)}$ be the collection of sequences in $\hs$ with length $\ell$.

\begin{claim}
To each $B\in \hb^{(\ell)}$ with $\ell\geq r$ there is some sequence $S\in \hs^{(\ell)}$ with $\widehat{S}=B$.
\end{claim}
\begin{proof}
Let us suppose the contrary and let $S=(x_1,\ldots,x_p)$ be a sequence of maximal length that occurred
at some stage of the branching process satisfying $\widehat{S}\subsetneqq B$. Since $\hb$ is $t$-intersecting,
 $|B\cap B_1|\geq t$, implying that $p\geq t$. Since $\widehat{S}$ is a proper subset of $B$, there exists $F\in \hf$ with $|\widehat{S} \cap F|<t$. In view of Lemma \ref{lem4-1} (ii) we can find  $B'\in \hb$ such that $|\widehat{S} \cap B'|<t$. Thus at some point we picked $S$ and some $\tilde{B}\in \hb$ with $|\widehat{S} \cap \tilde{B}|<t$. Since $\hb$ is $t$-intersecting, $|B\cap \tilde{B}|\geq t$. Consequently, for each $y\in B\cap (\tilde{B}\setminus \widehat{S})$ the sequence $(x_1,\ldots,x_p,y)$ occurred in the branching process. This contradicts the maximality of $p$. Hence there is an $S$ at some stage satisfying $\widehat{S}= B$. Since $\hb$ is $t$-intersecting, $|\widehat{S}\cap B'|\geq t$ for all $B'\in \hb$. Thus $\widehat{S}\in \hs$ and the claim holds.
\end{proof}
By Claim 2, we see that $|\hb^{(\ell)}|\leq |\hs^{(\ell)}|$.  Let $S=(x_1,\ldots,x_\ell)\in \hs^{(\ell)}$ and let $S_i=(x_1,\ldots,x_i)$ for $i=1,\ldots,\ell$. At the first stage, $w(S_t)=1/\binom{s(\hb)}{t}$.   Assume that $B_i$ is the selected set when replacing $S_{i-1}$ in the  branching process for $i=t+1,\ldots,\ell$. Clearly,  $x_i\in B_i$, $B_{t+1}\in \hb^{(\leq r)}$ and
\[
w(S)= \frac{1}{\binom{s(\hb)}{t}}\prod_{i=t+1}^\ell \frac{1}{|B_i\setminus \widehat{S_{i-1}}|}.
\]
Note that $s(\hb)\leq r\leq  \ell$, $|B_{t+1}\setminus \widehat{S_t}|\leq \ell$ and $|B_i\setminus \widehat{S_{i-1}}|\leq k$ for $i\geq t+2$. It follows that
\[
w(S)\geq \left(\binom{\ell}{t}\ell k^{\ell-t-1}\right)^{-1}.
\]
Thus,
\[
\sum_{r\leq \ell \leq k} \left(\binom{\ell}{t}\ell k^{\ell-t-1}\right)^{-1}|\hb^{(\ell)}|\leq \sum_{r\leq \ell \leq k} \sum_{S\in \hs^{(\ell)}}w(S) \sum_{S\in \hs}w(S)=1.
\]
\end{proof}

\begin{lem}\label{lem-4-3}
Suppose that $\tau_t(\hb^{(t+1)}\geq t+1$. Then $\hf=\ha(n,k,t)$.
\end{lem}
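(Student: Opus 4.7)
The plan is to show that under the hypothesis the basis $\hb=\hb_t(\hf)$ coincides with $\binom{X}{t+1}$ for some $(t+2)$-element set $X \subset [n]$; then Lemma~\ref{lem4-1}(ii) immediately gives $\hf = \{F \in \binom{[n]}{k} : |F \cap X| \geq t+1\}$, which is $\ha(n,k,t)$ up to isomorphism.

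First I would observe that the hypothesis $\tau_t(\hb^{(t+1)}) \geq t+1$ forces $|\hb^{(t+1)}| \geq 3$: a single $(t+1)$-set has $t$-covering number $t$ (any of its $t$-subsets works), and two $(t+1)$-sets whose intersection has size $t$ have that intersection as a $t$-cover. I would also rule out any basis element of size $<t+1$: each basis element has size $\geq t$ (it must $t$-intersect every $F \in \hf$), and a basis element of size exactly $t$ would be contained in every other basis element by the $t$-intersecting property of $\hb$ (Lemma~\ref{lem4-1}(i)), contradicting antichain combined with $|\hb^{(t+1)}| \geq 3$. Hence $s(\hb) \geq t+1$ and $\hb^{(t+1)} \neq \emptyset$.

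The main structural step is to prove: any two distinct $B_1, B_2 \in \hb^{(t+1)}$ satisfy $|B_1 \cap B_2| = t$, and every member of $\hb^{(t+1)}$ is contained in the $(t+2)$-set $X := B_1 \cup B_2$. Writing $T = B_1 \cap B_2$ and $\{x_i\} = B_i \setminus T$ for $i=1,2$, the hypothesis yields some $B_0 \in \hb^{(t+1)}$ with $T \not\subseteq B_0$. The constraints $|B_0 \cap B_j| \geq t$ for $j=1,2$, combined with $|T \cap B_0| \leq t-1$, force $x_1, x_2 \in B_0$ and $|T \cap B_0| = t-1$, so $B_0 \subset T \cup \{x_1,x_2\} = X$. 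For a generic $B \in \hb^{(t+1)}$ I would split on whether $T \subseteq B$: if yes, then $|B|=t+1$ forces $B = T \cup \{y\}$, and $|B \cap B_0| \geq t$ with $|T \cap B_0| = t-1$ forces $y \in \{x_1, x_2\}$, so $B \in \{B_1, B_2\}$; if no, then the argument used for $B_0$ applied to $B$ (in the role of $B_0$) gives $B \subseteq X$. I expect this rigidity argument to be the main obstacle, since one must carefully juggle three pairwise $t$-intersections to pin down the $\binom{X}{t+1}$ structure.

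Finally, I would upgrade $\hb^{(t+1)} \subseteq \binom{X}{t+1}$ to equality using saturatedness: if some $Y = X \setminus \{i_0\}$ were missing from $\hb^{(t+1)}$, then any $k$-set $F$ with $F \cap X = Y$ would fail to contain any member of $\hb$, yet $|F \cap F'| \geq |Y \cap B'| \geq t$ for every $F' \supseteq B' \in \hb \subseteq \binom{X}{t+1}$, so $F$ could be added to $\hf$ without violating $t$-intersectingness, contradicting saturatedness. A similar antichain-plus-$t$-intersecting analysis rules out basis elements of size $>t+1$: if $B \in \hb$ has $|B| \geq t+2$, then $|B \cap X| \geq t+1$ (otherwise some $(t+1)$-subset of $X$ fails to $t$-intersect $B$, which I would verify by omitting two elements of $X \setminus B$ in turn), so $B \supseteq X \setminus \{i\}$ for some $i$, contradicting the antichain property with $X \setminus \{i\} \in \hb^{(t+1)}$. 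Thus $\hb = \binom{X}{t+1}$ and the proof concludes.
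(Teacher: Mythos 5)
Your setup is correct and essentially matches the paper's: you pick $B_1, B_2 \in \hb^{(t+1)}$ with $|B_1\cap B_2|=t$, set $X = B_1\cup B_2$, use the hypothesis to extract $B_0\in\hb^{(t+1)}$ with $T=B_1\cap B_2\not\subseteq B_0$, and show $B_0\subset X$. Your preliminary observations ($|\hb^{(t+1)}|\geq 3$, $s(\hb)\geq t+1$) are also valid. The problem is the finish. Your last two steps are mutually dependent: to upgrade $\hb^{(t+1)}\subseteq \binom{X}{t+1}$ to equality you explicitly invoke ``$\hb\subseteq\binom{X}{t+1}$'' (when asserting that a $k$-set $F$ with $F\cap X=Y$ contains no member of $\hb$ and that every $F'\in\hf$ satisfies $|F\cap F'|\geq|Y\cap B'|\geq t$), which presupposes that $\hb$ has no member of size $>t+1$; but to rule out a member $B\in\hb$ of size $\geq t+2$ you appeal to the antichain condition against $X\setminus\{i\}\in\hb^{(t+1)}$, which presupposes $\hb^{(t+1)}=\binom{X}{t+1}$. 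Neither can go first as written. There is also a smaller slip in the large-$B$ step: showing $|B\cap X|\geq t+1$ by ``omitting two elements of $X\setminus B$'' only yields $|B\cap X|\geq t$; to get $t+1$ you must instead observe that the three specific sets $B_0,B_1,B_2$ already in $\hb^{(t+1)}$ have pairwise distinct one-element complements in $X$, so a $B$ with $|B\cap X|\leq t$ would miss at least two elements of $X$ and hence fail to $t$-intersect one of $B_0,B_1,B_2$.

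The paper avoids all of this by never attempting to determine $\hb$. After obtaining $B_0, B_1, B_2$ exactly as you do, it simply notes that an arbitrary $F\in\hf$ must satisfy $|F\cap B_i|\geq t$ for $i=0,1,2$; since the three singletons $X\setminus B_i$ are distinct, a set $F$ with $|F\cap X|\leq t$ misses at least two of these singletons' union and cannot $t$-intersect all three $B_i$. Hence $|F\cap X|\geq t+1$, i.e.\ $\hf\subseteq\ha(n,k,t)$, and saturatedness then forces equality because $\ha(n,k,t)$ is itself $t$-intersecting. You can repair your proof by inserting this one observation right after establishing $B_0\subset X$ and dropping the characterization of $\hb$ entirely; the characterization $\hb=\binom{X}{t+1}$, while true, then follows as an afterthought rather than being needed for the conclusion.
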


\begin{proof}
Choose $B_1,B_2 \in \hb^{(t+1)}$ and assume by symmetry that $B_i=[t]\cup \{t+i\}$ for $i=1,2$. Since $\tau_t(\hb^{(t+1)})\geq t+1$, we may choose $B_3\in \hb^{(t+1)}$  satisfying $[t]\nsubseteq B_3$. Now $|B_2\cap B_i|\geq t$ implies $\{t+1,t+2\}\subset B_3$. Using $|B_3|=t+1$, by symmetry we may assume that $B_3=[t+2]\setminus \{t\}$. Now take an arbitrary $F\in \hf$. It is clear that $|F\cap B_i|\geq t$ can only hold for all $1\leq i\leq 3$ if $|F\cap [t+2]|\geq t+1$. That is $\hf\subset \ha(n,k,t)$. Since $\hf$ is saturated, $\hf=\ha(n,k,t)$.
\end{proof}

 \begin{proof}[Proof of Theorem \ref{main-3}]
By $\eqref{eq-1-2}$ and  $\eqref{ineq-4-1}$, we have
 \begin{align*}
 |\hi(\hs_{[t]})|&= \sum_{0\leq i\leq k-t-1}\binom{n-t}{i}\\[5pt]
 &\overset{\eqref{ineq-4-1}}{\leq} \frac{n-t-2}{n-t-2k}\sum_{0\leq i\leq k-t-1}\binom{n-t-2}{i}\\[5pt]
 &< \binom{t+2}{t}\sum_{0\leq i\leq k-t-1}\binom{n-t-2}{i}\\[5pt]
 &<|\hi(\ha(n,k,t))|.
 \end{align*}
Thus, we may assume that $s=s(\hb)\geq t+1$.
 Let us partition $\hf$ into $\hf^{(s)}\cup \ldots\cup\hf^{(k)}$ where $F\in \hf^{(\ell)}$ if $\max\{|B|\colon B\in \hb, B\subset F\}=\ell$. Set
 \[
\hi_{\ell} = \left\{F\cap F'\colon F\in\hf^{(\ell)}, F'\in \hf^{(s)}\cup \ldots\cup\hf^{(\ell)} \right\}.
 \]
 Then
 \[
 |\hi(\hf)| \leq \sum_{s\leq \ell \leq k} |\hi_{\ell}|.
 \]
 The point is that for $F\in \hf^{(\ell)}$ and $B\subset F, B\in \hb^{\ell}$ for an arbitrary $F'\in \hf$,
 \[
 F\cap F' =(B\cap F')\cup ((F\setminus B)\cap F').
 \]
 Note that $|B\cap F'|\geq t$ and $|(F\setminus B)\cap F'| \leq |F\setminus B|=k-\ell$. It follows that for $s\leq \ell\leq k$
 \begin{align}\label{ineq-4-7}
 |\hi_\ell| \leq \left(\sum_{t\leq j\leq \ell}\binom{\ell}{j}\right)|\hb^{(\ell)}|\sum_{0\leq i\leq k-\ell }\binom{n-t}{i}.
 \end{align}

 Let $\alpha$ be the smallest integer such that $\tau_t(\hb^{(\leq \alpha)})\geq t+1$. The family $\hf'=\cup_{i=l}^{\alpha-1}\hf^{(i)}$ is a trivial $t$-intersecting family. By \eqref{ineq-4-1}, we have for $n\geq 5k$
 \begin{align}\label{ineq-4-4}
 \left|\bigcup_{i=s}^{\alpha-1}\hi_i\right|\leq |\hi(\hs_{[t]})|&=\sum_{0\leq i\leq k-t-1}\binom{n-t}{i}\nonumber\\[5pt]
 &\overset{\eqref{ineq-4-1}}{\leq}  \frac{n-t-2}{n-t-2k}\sum_{0\leq i\leq k-t-1}\binom{n-t-2}{i}\nonumber\\[5pt]
 &\leq 2\sum_{0\leq i\leq k-t-1}\binom{n-t-2}{i}.
 \end{align}

 If $\alpha=s= t+1$, then $\hb^{(t+1)}$ is a $t$-intersecting $(t+1)$-uniform family with $t$-covering number $t+1$. By Lemma \ref{lem-4-3}, $\hf=\ha(n,k,t)$ and there is nothing to prove. Thus we may assume that $\alpha\geq t+2$.

Define
\begin{align*}
f(n,k,\ell)= \left(\sum_{t\leq j\leq \ell}\binom{\ell}{j}\right)\binom{\ell}{t}\ell k^{\ell-t-1}\sum_{0\leq i\leq k-\ell}\binom{n-t}{i}
\end{align*}
and let
\[
\lambda_\ell =\left(\binom{\ell}{t}\ell k^{\ell-t-1}\right)^{-1}|\hb^{(\ell)}|.
\]
Then by \eqref{ineq-4-7}
\begin{align}\label{ineq-4-8}
\sum_{\alpha \leq \ell\leq k} |\hi_\ell| = \sum_{\alpha \leq \ell\leq k}\lambda_\ell  \cdot f(n,k,\ell).
\end{align}
By \eqref{ineq-4-2} and  \eqref{ineq-4-6}, we have
 \begin{align*}
 \frac{f(n,k,\ell)}{f(n,k,\ell+1)}&=\frac{\sum\limits_{t\leq j\leq \ell}\binom{\ell}{j}}{\sum\limits_{t\leq j\leq \ell+1}\binom{\ell+1}{j}} \cdot\frac{\binom{\ell}{t}\ell k^{\ell-t-1}}{ \binom{\ell+1}{t}(\ell+1) k^{\ell-t}}\cdot\frac{\sum\limits_{0\leq i\leq k-\ell}\binom{n-t}{i}}{\sum\limits_{0\leq i\leq k-\ell-1}\binom{n-t}{i}}\\[5pt]
 &\geq \frac{1}{2(t+1)} \cdot \frac{(\ell+1-t)\ell}{(\ell+1)^2k}\cdot \frac{n-t-k}{k}.
 \end{align*}
 By $\ell\geq t+1\geq 3$, we have
 \[
 \frac{\ell+1-t}{\ell+1} \cdot \frac{\ell}{\ell+1}\geq \frac{2}{t+2}\cdot \frac{3}{4}\geq \frac{3}{2(t+2)}.
 \]
 Then by $n\geq \frac{4}{3}(t+2)^2k^2$
\[
\frac{f(n,k,\ell)}{f(n,k,\ell+1)}\geq \frac{3(n-t-k)}{4(t+1)(t+2)k^2}\geq 1.
\]
Hence $f(n,k,\ell)$ is  decreasing as a function of $\ell$.
Moreover, \eqref{ineq-thb} implies $\sum\limits_{\alpha \leq \ell\leq k} \lambda_{\ell} \leq 1$. From \eqref{ineq-4-8} we see
\[
\sum_{\alpha \leq \ell\leq k} |\hi_\ell| \leq f(n,k,\alpha)\leq f(n,k,t+2).
\]
Therefore,
 \begin{align*}
  \sum_{\alpha\leq \ell\leq k} |\hi_\ell| &\leq \left(\binom{t+2}{t}+\binom{t+2}{t+1}+\binom{t+2}{t+2}\right)\binom{t+2}{t}(t+2) k\sum_{0\leq i\leq k-t-2}\binom{n-t}{i}\nonumber\\[5pt]
  &\overset{\eqref{ineq-4-2}}{\leq} \frac{(t+2)^2(t+1)(t^2+5t+8)k}{4}\cdot \frac{k}{n-t-k}\sum_{0\leq i\leq k-t-1}\binom{n-t}{i}\nonumber\\[5pt]
  &\overset{\eqref{ineq-4-1}}{\leq} \frac{(t+2)^2(t+1)(t+2)(t+4)k^2(n-t-2)}{4(n-t-k)(n-t-2k)}\sum_{0\leq i\leq k-t-1}\binom{n-t-2}{i}.
 \end{align*}
 Note that $n\geq 5k$ implies
\[
\frac{n-t-2}{n-t-2k} \leq 2
\]
 and  $n\geq 3(t+2)^3k^2$, $t\geq 2$ imply
 \[
 \frac{(t+2)^3(t+1)(t+4)k^2}{4(n-t-k)} \leq \frac{1}{2}\left(\binom{t+2}{2}-2\right).
 \]
 It follows that
 \begin{align}\label{ineq-4-5}
  \sum_{\alpha\leq \ell\leq k} |\hi_\ell| \leq \left(\binom{t+2}{2}-2\right)\sum_{0\leq i\leq k-t-1}\binom{n-t-2}{i}.
 \end{align}
 By \eqref{ineq-4-4} and \eqref{ineq-4-5}, we obtain that
\begin{align*}
|\hi(\hf)|\leq  \left|\bigcup_{i=s}^{\alpha-1}\hi_i\right|+\sum_{\alpha \leq \ell\leq k}|\hi_\ell| < |\hi(\ha(n,k,t))|,
\end{align*}
concluding the proof of the theorem.
 \end{proof}

 \section{Further problems and results}

 In their seminal paper \cite{ekr} Erd\H{o}s, Ko and Rado actually proved their main result for antichains. Namely, instead of considering $k$-graphs $\hf\subset\binom{[n]}{k}$ they suppose that $\hf$ is an antichain of rank $k$, that is $|F|\leq k$ for all $F\in \hf$. The reason that this tendency has all but disappeared from recent research is that a $t$-intersecting antichain $\hf$ of rank $k$ which is not $k$-uniform can always be replaced by a t-intersecting family $\tilde{\hf}\subset \binom{[n]}{k}$ with $|\tilde{\hf}|>|\hf|$. The way to do is to apply an operation on antichains discovered already  by Sperner \cite{Sperner}.

 For a family $\ha\subset \binom{[n]}{a}$ define its {\it shade} $\sigma^+(\ha)$ by
 \[
 \sigma^+(\ha) = \left\{B\in \binom{[n]}{a+1}\colon \exists A\in \ha, A\subset B\right\}.
 \]
Sperner \cite{Sperner} proved that for $a<n/2$, $|\sigma^+(\ha)|\geq |\ha|$ with strict inequality unless $a=\frac{n-1}{2}$ and $\ha=\binom{[n]}{\frac{n-1}{2}}$. Let $\hf\subset 2^{[n]}$ be a $t$-intersecting antichain of rank $k$, $n\geq 2k-t$. Suppose that $a=\min\{|F|\colon F\in \hf\}$ and $a<k$. Define
\[
\hf^{(a)}=\{F\in \hf\colon |\hf|=a\} \mbox{ and } \tilde{\hf}=(F\setminus \hf^{(a)})\cup \sigma^+(\hf^{(a)}).
\]
Then not only is $\tilde{\hf}$ a $t$-intersecting antichain of rank $k$ with $|\tilde{\hf}|>|\hf|$ but $\hi(\tilde{\hf})\supset \hi(\hf)$ can be checked easily as well. This shows that it was reasonable to restrict our attention to $k$-uniform families.

However there is a related, very natural problem.

\begin{prob}
Determine or estimate $\max |\hi(\ha)|$ over all antichain $\ha \subset 2^{[n]}$.
\end{prob}

\begin{example}
Let $\ell\leq \frac{n}{2}$ and define $\ha=\binom{[n]}{n-\ell}$. Clearly,
\[
\hi(\ha) =\left\{B\subset [n]\colon n-2\ell\leq |B|<n-\ell\right\}.
\]
Choosing $\ell=\lfloor n/3\rfloor$, we have
\[
|\hi(\ha)| =2^n-\sum_{0\leq i\leq \lfloor\frac{n}{3}\rfloor} \binom{n}{i}-\sum_{0\leq j\leq n-2\lfloor \frac{n}{3}\rfloor}\binom{n}{j}.
\]
\end{example}

\begin{prop}
If $\ha \subset 2^{[n]}$ is an antichain, then $|\hi(\ha)|<2^n-\sqrt{2}^n$.
\end{prop}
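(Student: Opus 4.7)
The plan is to bound $|\hi(\ha)|$ via the upset $\ha^{\uparrow}=\{C\subseteq[n]:A\subseteq C \text{ for some } A\in\ha\}$. The key structural observation is
\[
\hi(\ha)\cap \ha^{\uparrow} = \emptyset.
\]
Indeed, if $C\in \hi(\ha)\cap \ha^{\uparrow}$, write $C=A\cap A'$ for distinct $A,A'\in \ha$ and pick $A_0\in \ha$ with $A_0\subseteq C$. Then $A_0\subseteq C\subseteq A$ and $A_0\subseteq C\subseteq A'$, so the antichain property of $\ha$ forces $A_0=A=A'$, a contradiction. Since $\ha^{\uparrow}\subseteq 2^{[n]}$, this yields the bound
\[
|\hi(\ha)|\leq 2^n-|\ha^{\uparrow}|.
\]

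With this in hand I would split on the size of $\ha^{\uparrow}$. If $|\ha^{\uparrow}|>\sqrt{2}^{\,n}$, the display above immediately gives $|\hi(\ha)|<2^n-\sqrt{2}^{\,n}$ and we are done. Otherwise $|\ha^{\uparrow}|\leq \sqrt{2}^{\,n}$, and since $\ha\subseteq \ha^{\uparrow}$ this forces $|\ha|\leq \sqrt{2}^{\,n}$. In this regime I use the crude bound that every element of $\hi(\ha)$ is determined by an unordered pair of distinct members of $\ha$, so
\[
|\hi(\ha)|\leq \binom{|\ha|}{2}\leq \binom{\sqrt{2}^{\,n}}{2}=\frac{\sqrt{2}^{\,n}(\sqrt{2}^{\,n}-1)}{2}=\frac{2^n-\sqrt{2}^{\,n}}{2}<2^n-\sqrt{2}^{\,n},
\]
the final strict inequality being just $2^n>\sqrt{2}^{\,n}$, valid for all $n\geq 1$. (The degenerate cases $|\ha|\leq 1$ are trivial since $\hi(\ha)=\emptyset$.)

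There is no serious obstacle here. The whole proof rests on the single structural observation $\hi(\ha)\cap \ha^{\uparrow}=\emptyset$, which is a direct consequence of antichain-ness, together with the clean dichotomy in which either the upset $\ha^{\uparrow}$ is large (so $2^n-|\ha^{\uparrow}|$ controls $|\hi(\ha)|$) or $\ha$ itself is small (so the pair-count $\binom{|\ha|}{2}$ takes over). The threshold $\sqrt{2}^{\,n}$ appearing in the statement is precisely the crossover point where the two elementary bounds meet, which explains why it is the natural value to prove.
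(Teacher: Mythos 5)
Your argument is correct and follows essentially the same route as the paper: split on whether the relevant family is larger or smaller than $\sqrt{2}^{\,n}$, using the pair-count bound $|\hi(\ha)|\leq\binom{|\ha|}{2}$ in the small case and disjointness inside $2^{[n]}$ in the large case. The paper uses the weaker (but sufficient) disjointness $\ha\cap\hi(\ha)=\emptyset$ in place of your $\ha^{\uparrow}\cap\hi(\ha)=\emptyset$; your version is a mild sharpening that is not needed for the stated bound, and otherwise the two proofs coincide.
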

\begin{proof}
Note that
\[
|\hi(\ha)|\leq \binom{|\ha|}{2}.
\]
Consequently, if $|\ha|\leq \sqrt{2}^n$ then $|\hi(\ha)|<2^n/2<2^n-\sqrt{2}^n$. Thus we can assume $|\ha|> \sqrt{2}^n$. Since $\ha\cap\hi(\ha)=\emptyset$, we have
\[
|\hi(\ha)|\leq 2^n-|\ha|<2^n-\sqrt{2}^n.
\]
\end{proof}

Two families $\ha,\hb$ are called {\it cross-Sperner} if $A\not\subset B$ and $B\not\subset A$ hold for all $A\in \ha$, $B\in \hb$. Set
\[
\hi(\ha,\hb) =\{A\cap B\colon A\in \ha,B\in \hb\}.
\]
Define
\[
m(n) =\max\{|\hi(\ha,\hb)|\colon \ha,\hb\subset 2^{[n]} \mbox{ are cross-Sperner}\}.
\]
\begin{example}
Let $[n]=X\cup Y$ be a partition. Define
\[
\ha=\{A\cup Y\colon A\subsetneq X\},\ \hb=\{X\cup B\colon B\subsetneq Y\}.
\]
Then
\[
\hi(\ha,\hb) =\left\{A\cup B\colon A\subsetneq X, B\subsetneq Y\right\}
\]
and
\[
|\hi(\ha,\hb)|=2^n-2^{|X|}-2^{|Y|}+1.
\]
\end{example}

\begin{thm}
$m(n)= 2^n-2\cdot2^{n/2}+1$ holds for $n=2d$ even.
\end{thm}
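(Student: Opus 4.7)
The example already shows $m(2d)\geq 2^{2d}-2\cdot 2^{d}+1=(2^{d}-1)^2$, so only the matching upper bound must be proved. The plan has three parts.

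Part (i): reduction by complementation. Set $\hc=\{[n]\setminus A:A\in\ha\}$ and $\hd=\{[n]\setminus B:B\in\hb\}$. Since complementation is an order-reversing involution, $(\hc,\hd)$ is again cross-Sperner, and the bijection $S\mapsto[n]\setminus S$ identifies the family of pairwise intersections $\hi(\ha,\hb)$ with the family of pairwise unions $\hu(\hc,\hd):=\{C\cup D:C\in\hc,D\in\hd\}$. Hence it suffices to prove
\[
|\hu(\hc,\hd)|\leq (2^{d}-1)^{2}
\]
for every cross-Sperner pair $\hc,\hd\subseteq 2^{[n]}$ with $n=2d$.

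Part (ii): structural target. For any partition $[n]=X\sqcup Y$ with $|X|=|Y|=d$, the inclusion-exclusion count
\[
\bigl|\{E\subseteq[n]:X\not\subseteq E\text{ and }Y\not\subseteq E\}\bigr|=2^{n}-2^{d}-2^{d}+1=(2^{d}-1)^{2}.
\]
Thus it is enough to exhibit such a balanced partition with the property that no $E\in\hu(\hc,\hd)$ contains $X$ and no $E\in\hu(\hc,\hd)$ contains $Y$. Two simple observations already narrow the search: for any $E\in\hu(\hc,\hd)$ one has $|E|\geq 2$ (since writing $E=C\cup D$ with $C,D$ incomparable forces $C,D\subsetneq E$ and both nonempty), and the condition ``$X\subseteq E=C\cup D$'' is equivalent to saying that $X$ is covered by the pair $(C,D)$. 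Equivalently, writing $\mathcal{F}_{d}:=\{Z\in\binom{[n]}{d}:Z\subseteq E\text{ for some }E\in\hu(\hc,\hd)\}$, the desired partition is a pair $\{X,[n]\setminus X\}$ in $\binom{[n]}{d}$ with $\{X,[n]\setminus X\}\cap\mathcal{F}_{d}=\emptyset$.

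Part (iii): producing the partition. The plan is a shifting (compression) argument. The usual $(i,j)$-shift applied simultaneously to $\hc$ and $\hd$ preserves cross-Sperner-ness of the pair (by the same coupling argument used for cross-intersecting families) and does not decrease $|\hu(\hc,\hd)|$, so one may assume both $\hc$ and $\hd$ are shifted. In the shifted regime, the antipodal involution $Z\mapsto[n]\setminus Z$ on $\binom{[n]}{d}$ interacts with the shifts in a controlled way, and one can show that $\mathcal{F}_{d}\cup\sigma(\mathcal{F}_{d})\neq\binom{[n]}{d}$: otherwise, every balanced pair $\{X,[n]\setminus X\}$ would force a witness $E\in\hu(\hc,\hd)$ of size $\geq d$, and combining enough such witnesses with the shifted nested structure would produce a comparable pair $(C,D)\in\hc\times\hd$, contradicting cross-Sperner-ness. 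Picking $X$ from an antipodal pair missing $\mathcal{F}_{d}$ then supplies the partition required in Part (ii).

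The hard part is precisely the last step: verifying rigorously that in the shifted regime $\mathcal{F}_{d}\cup\sigma(\mathcal{F}_{d})$ cannot exhaust $\binom{[n]}{d}$. The extremal example meets this inequality tightly (there $|\mathcal{F}_{d}|=\binom{n}{d}-2$ and $\sigma(\mathcal{F}_{d})=\mathcal{F}_{d}$), so a purely quantitative count will not do; one must use that every pair $(C,D)\in\hc\times\hd$ is incomparable, together with the nested structure obtained after shifting, to rule out the missing antipodal pair. This is where I expect all the technical work to go, and it is the step most likely to require additional ideas beyond the clean reduction in Parts (i)--(ii).
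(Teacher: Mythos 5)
Your proposal is not a complete proof, and you say so yourself: Part~(iii), where all the work would have to happen, is left as a plan. The compression step is itself nontrivial (it is not obvious that the standard $(i,j)$-shift, applied simultaneously to $\hc$ and $\hd$, cannot decrease $|\hu(\hc,\hd)|$ --- unions of shifted sets behave less tamely than the sets themselves), and the crucial claim that in the shifted regime $\mathcal{F}_d \cup \sigma(\mathcal{F}_d) \neq \binom{[n]}{d}$ is simply asserted. You correctly flag this as tight at the extremal example, which means any proof of it must genuinely exploit the incomparability structure; no such argument is supplied. So there is a real gap, and the complementation reduction and structural target in Parts~(i)--(ii), while correct, do not yet yield an upper bound.

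The paper's proof uses none of this machinery. It observes that for a cross-Sperner pair, $A \cap B \subsetneq A$ and $A \cap B \subsetneq B$, so $\ha \cap \hi(\ha,\hb) = \emptyset = \hb \cap \hi(\ha,\hb)$; moreover $\ha \cap \hb = \emptyset$ and $[n] \notin \ha \cup \hb \cup \hi(\ha,\hb)$. Hence
\[
|\hi(\ha,\hb)| \leq 2^n - |\ha| - |\hb| - 1,
\]
and trivially $|\hi(\ha,\hb)| \leq |\ha|\,|\hb|$. Writing $s = |\ha| + |\hb|$: if $s \geq 2(2^d-1)$ the first bound gives $|\hi(\ha,\hb)| \leq 2^n - 2\cdot 2^d + 1$, and if $s \leq 2(2^d-1)$ then AM--GM on the second gives $|\hi(\ha,\hb)| \leq (s/2)^2 \leq (2^d-1)^2 = 2^n - 2\cdot 2^d + 1$. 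This is a two-line convexity argument with no compression, no structural classification of intersections, and no need to exhibit a separating balanced partition. The idea you are missing is that the upper bound does not require locating the extremal structure; it only requires two crude inequalities whose minimum, over all splits of $|\ha|+|\hb|$, is already tight at $(2^d-1)^2$. Your route, even if Part~(iii) could be repaired, would be far more involved than necessary.
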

\begin{proof}
The lower bound comes from the example with $|X|=\lfloor n/2\rfloor$, $|Y|=\lceil n/2\rceil$. Note that
for $A,A'\in\ha$, $B,B'\in \hb$ the cross-Sperner property implies $A\not\subset A'\cap B'$, $B\not\subset A'\cap B'$. In particular,
\[
\ha\cap \hi(\ha,\hb)=\emptyset=\hb\cap \hi(\ha,\hb).
\]
Cross-Sperner property implies $\ha\cap \hb=\emptyset$ and $[n]\notin \ha\cup\hb\cup \hi(\ha,\hb)$. Thus
\[
|\ha|+|\hb|+|\hi(\ha,\hb)|\leq 2^n-1
\]
or equivalently
\begin{align}\label{ineq-crossSperner1}
|\hi(\ha,\hb)|\leq 2^n-|\ha|-|\hb|-1.
\end{align}
Obviously,
\begin{align}\label{ineq-crossSperner2}
|\hi(\ha,\hb)|\leq |\ha|\cdot|\hb|.
\end{align}
Suppose that $n=2d$ (even).  If $|A|+|B|\geq 2(2^d-1)$, then \eqref{ineq-crossSperner1} implies
\[
|\hi(\ha,\hb)|\leq 2^n-2\cdot 2^d+1.
\]
If $\frac{|A|+|B|}{2}\leq 2^d-1$ then the inequality between arithmetic and geometric mean yields via \eqref{ineq-crossSperner2}:
\[
|\hi(\ha,\hb)|\leq \left(2^d-1\right)^2=2^n-2\cdot2^d+1.
\]
\end{proof}
However the proof only gives $\hi(\ha,\hb)\leq 2^n-2\cdot2^{n/2}+1$ for $n=2d+1$.
\begin{prob}
For $n=2d+1$, does $m(n)=2^n-2^{d+1}-2^d+1$ hold?
\end{prob}
 
\end{document}